\newenvironment{code}
{\VerbatimEnvironment
  \begin{bclogo}[couleur = black!10, couleurBord = black!20,
  arrondi = 0.1, logo = \bchorloge]
  {\small{Maple Program}}\par\nobreak\smallskip
  \begin{Verbatim}}
{\end{Verbatim}\end{bclogo}}
\newtheorem{thm}{Theorem}
\newtheorem{prop}[thm]{Proposition}
\newtheorem{cor}[thm]{Corollary}
\theoremstyle{definition}
\newtheorem*{defi}{Definition}
\newtheorem*{rem}{Remark}
\begin{document}

\title{Ansatz in a Nutshell \\
\large A comprehensive step-by-step guide to\\
polynomial, $C$-finite, holonomic, and $C^2$-finite sequences}

\author{Tipaluck Krityakierne$^1$ \and Thotsaporn Aek Thanatipanonda$^{2,*}$}
\date{%
    \footnotesize{$^1$Department of Mathematics, Faculty of Science, Mahidol University, Bangkok, Thailand\\%
    $^2$Science Division, Mahidol University International College, Nakhon Pathom, Thailand}\\
    $^*$Correspondence: \href{mailto:thotsaporn@gmail.com}{\texttt{thotsaporn@gmail.com}}\\[2ex]%
}

\maketitle

\begin{abstract}
Given a sequence 1, 1, 5, 23, 135, 925, 7285, 64755, 641075, 6993545, 83339745,..., how can we guess a formula for it? 
This article will quickly walk you through the concept of {\it ansatz} for classes of polynomial, $C$-finite, holonomic, and the most recent addition $C^2$-finite sequences.
For each of these classes, we discuss in detail various aspects of the guess and check, generating functions, closure properties, and closed-form solutions. Every theorem is presented with an accessible proof, followed by several examples intended to motivate the development of the theories. Each example is accompanied by a Maple program with the purpose of demonstrating use of the program in solving problems in this area. 
While this work aims to give a comprehensive review of existing ansatzes, we also systematically fill a research gap in the literature by providing theoretical and numerical results for the $C^2$-finite sequences.  
We hope the readers will enjoy the journey through our unifying framework for the study of ansatzes.
\end{abstract}


\section{Getting started}
When we come across a word or a phrase we have never seen before, we look it up in a dictionary. Likewise, whenever we encounter a sequence for which we do not know a formula, we could look it up in the Sloane's
OEIS, an online dictionary for number sequences \cite{OEIS}. 
However, as it is not possible that the OEIS database consists of everyone of them, wouldn't it be great if we could find a formula by ourselves, regardless of whether or not our sequence is there? And this is precisely the central theme of this work.

\begin{tcolorbox}
\textbf{Theme:}   {Given a sequence $a_n, \;\ n=0,1,2,\dots$, find or guess a (homogeneous) linear recurrence relation of the form:
\[  c_r(n)a_{n+r}+  c_{r-1}(n)a_{n+r-1}+ \dots + c_0(n)a_{n} = 0,  \]
where $c_i(n)$ could be a constant, a polynomial in $n$,
or even a linear recurrence (in $n$) itself.}
\end{tcolorbox}

\begin{figure}
\label{fig:paper_structure}
    \centering
    \includegraphics[scale=0.29]{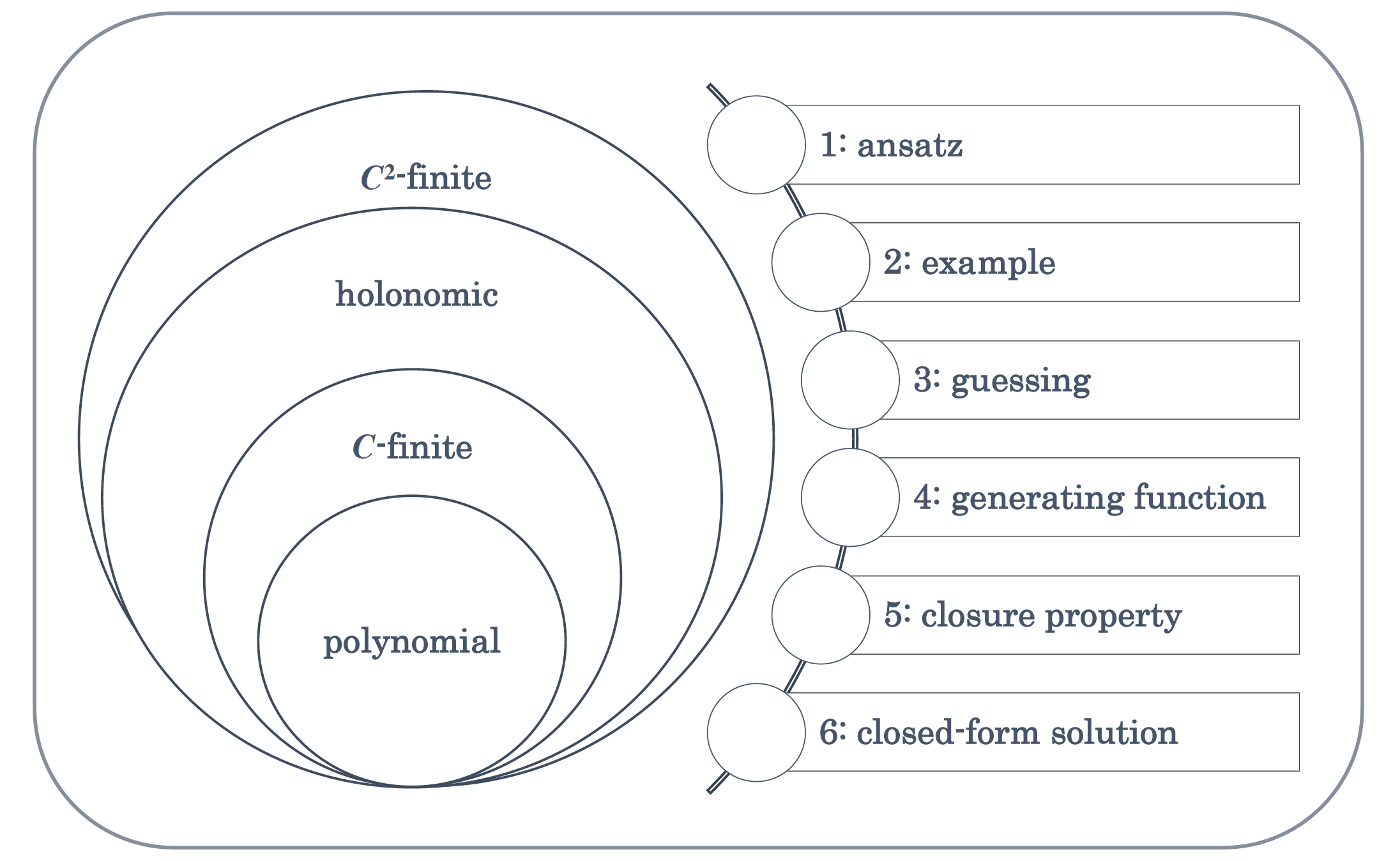}
    \caption{Left: the classes of sequences in this study; Right: the structure of the paper}
\end{figure}

This paper is intended to provide a comprehensive background on the concept of ansatz for classes of polynomial, $C$-finite, holonomic, and the recently developed $C^2$-finite sequences. The inclusion relation of the four classes is depicted in Figure \ref{fig:paper_structure}. 
While there are other classes of sequences (e.g. algebraic sequences, hypergeometric sequences), we focus exclusively on these classes so that we can provide an in-depth review of the subject from both the theoretical and practical aspects. In addition, the reader may notice beauty in the embedding nature of these sequences, i.e. a polynomial sequence as a coefficient of the holonomic sequences, and a $C$-finite sequence as a coefficient of the $C^2$-finite sequences. The right panel of Figure \ref{fig:paper_structure} gives a quick overview of the schematic structure of the paper. 

Consequently, the aim of this paper is twofold: firstly, to present a step-by-step guide to ansatzes, and secondly to extend the knowledge of existing ansatzes to the class of $C^2$-finite sequences in a systematic manner. We have tried to keep the paper as self-contained yet easy-to-follow as possible. Every theorem is presented with a proof, where much effort has been made to make the proof accessible and transparent. Every theorem is illustrated with examples or applications that stimulated the development of the concept. While most of these examples are solved analytically, Maple has been used in several intermediate calculations and simplification of algebraic expressions.  At the end of each example, we demonstrate the command required to enter the data of the problem into our Maple program.  
 
Last but not least, it is worth emphasizing that the steps we provided in the proofs of theorems can be used to construct problem solutions, and in fact we have followed these steps precisely when implementing our Maple program.  The interested reader is invited to study the codes accompanying this paper, provided at \texttt{\url{https://thotsaporn.com/Ansatz.html}} or implement the program in their favorite programming environment. 

The outline of this paper is as follows. In the next section, we give a comprehensive review of the existing ansatz along with important theoretical properties.
Section \ref{sec:C2} presents results related to $C^2$-finite sequences where we formally give a definition of $C^2$-finite, and establish several results concerning the generating function and the closure properties.  We also list a few interesting unsolved problems in that section. 
Through our presentation, we hope that this paper will provide a unifying framework for understanding the various aspects of ansatz in the classes of polynomial, $C$-finite, holonomic, and $C^2$-finite sequences. 
\section{Comprehensive review of existing ansatz}
This section concentrates on a comprehensive review of ansatzes in the classes of polynomials (as a sequence), $C$-finite, and holonomic sequences. For a list of excellent resources, see for example, \cite{KZ,Z1} for $C$-finite, \cite{K} for holonomic, as well as \cite{BP,KP} and the bibliography. There are also several available software tools that provide excellent computing facilities for ansatzes.  The reader is encouraged to check out e.g. the Maple package \texttt{gfun} \cite{SZ} and Mathematica package \texttt{GeneratingFunctions} \cite{M} which provide commands for dealing with holonomic sequences. Although this list of references is by no means exhaustive, we acknowledge the contributions of all pioneer researchers in the field.

\subsection{Polynomial as a sequence}
\begin{enumerate}[leftmargin=0.4in,label=\textbf{\arabic*.}]
\item {\bf Ansatz:} $a_n = c_kn^k+c_{k-1}n^{k-1}+\dots+c_0.$ 

\item {\bf Example:} Let $ \displaystyle a_n = \sum_{i=1}^n i^2.$ \;\
Here, $a_n = \dfrac{n^3}{3}+\dfrac{n^2}{2}+\dfrac{n}{6} = \dfrac{n(n+1)(2n+1)}{6}$.

\newpage
\item {\bf Guessing:} 

Input: the degree $k$ of polynomial and 
sequence $a_n$ (of length more than $k+1$).
We solve the system of linear equations for $c_0, c_1, \dots, c_k$:

\[  \begin{bmatrix}
c_0 & c_1 & \dots & c_k
\end{bmatrix} 
\begin{bmatrix}
1 & 1 & 1 & \dots & 1\\
0 & 1 & 2 & \dots & k\\
& & \dots \\
0^k & 1^k & 2^k & \dots & k^k\\
\end{bmatrix} =
\begin{bmatrix}
a_0 & a_1 & \dots & a_k
\end{bmatrix}.
\]

This matrix equation can be solved quickly using the inverse of the (Vandermonde) matrix that it is multiplied with.
Once all these $c_i$'s are obtained, we have to verify that our conjectured polynomial is valid for the rest of the terms $a_n,  n > k$ in the sequence (otherwise there is no solution).


\begin{code}
> A:= [seq(add(i^2,i=0..n),n=0..20)]; 
> GuessPol(A,0,n);
\end{code}



\item  {\bf   Linear recurrence:} 

The following proposition provides a linear recurrence relation for a polynomial sequence.
For convenience, we define the \emph{left shift operator} on $a_n$ by \;\ $Na_n = a_{n+1}.$   

\begin{prop} \label{prop1}
$a_n$ is a polynomial in $n$ of degree at most $k$ 
if and only if $(N-1)^{k+1}a_n = 0$. 
\end{prop}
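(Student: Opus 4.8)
The plan is to prove both directions by analyzing the effect of the operator $N-1$ on polynomials. The key observation is that $N - 1$ acts as a \emph{discrete derivative}: if $p(n)$ is a polynomial of degree exactly $d \geq 1$ with leading term $c\, n^d$, then $(N-1)p(n) = p(n+1) - p(n)$ is a polynomial of degree exactly $d-1$ (the $n^d$ terms cancel, and the next term $d\cdot c\, n^{d-1}$ survives with nonzero coefficient). If $p(n)$ is a nonzero constant, then $(N-1)p(n) = 0$. So applying $N-1$ strictly lowers the degree of a nonconstant polynomial by exactly one, and annihilates constants.

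For the forward direction, suppose $a_n$ is a polynomial in $n$ of degree at most $k$. Applying the degree-drop observation inductively, $(N-1)^j a_n$ is a polynomial of degree at most $k-j$ for each $j \leq k$, so $(N-1)^k a_n$ is a constant, and one more application gives $(N-1)^{k+1} a_n = 0$. For the reverse direction, I would argue by induction on $k$. The base case $k = 0$: if $(N-1)a_n = 0$ then $a_{n+1} = a_n$ for all $n$, so $a_n$ is constant, hence a polynomial of degree at most $0$. For the inductive step, suppose $(N-1)^{k+1}a_n = 0$. Let $b_n := (N-1)a_n = a_{n+1} - a_n$; then $(N-1)^k b_n = 0$, so by the induction hypothesis $b_n$ is a polynomial in $n$ of degree at most $k-1$. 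It then remains to show that any sequence whose first difference $a_{n+1} - a_n$ is a polynomial of degree at most $k-1$ must itself be a polynomial of degree at most $k$. This follows by telescoping: $a_n = a_0 + \sum_{i=0}^{n-1} b_i$, and a sum of the form $\sum_{i=0}^{n-1} i^j$ is a polynomial in $n$ of degree $j+1$ (for instance by Proposition~\ref{prop1} applied in the already-proven direction, or by the standard fact about sums of powers), so summing a degree-$(k-1)$ polynomial yields a degree-$k$ polynomial.

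An alternative, slicker approach for the reverse direction avoids the telescoping step: work with the basis of binomial coefficients $\binom{n}{0}, \binom{n}{1}, \dots, \binom{n}{k}$, which spans the space of polynomials of degree at most $k$. One checks directly that $(N-1)\binom{n}{j} = \binom{n}{j-1}$ for $j \geq 1$ and $(N-1)\binom{n}{0} = 0$, so on the $(k+1)$-dimensional space $\mathcal{P}_k$ of polynomials of degree $\leq k$ the operator $N-1$ is nilpotent of index exactly $k+1$, i.e. $(N-1)^{k+1}$ is zero on $\mathcal{P}_k$ but $(N-1)^k$ is not. Combined with a dimension count — the solution space of the linear recurrence $(N-1)^{k+1}a_n = 0$ over, say, $\C$ is exactly $(k+1)$-dimensional (it is determined by the $k+1$ initial values $a_0,\dots,a_k$) — this shows $\mathcal{P}_k$ \emph{equals} the full solution space, giving both directions at once.

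The main obstacle, and the only place requiring genuine care rather than routine verification, is the reverse direction: deducing that $a_n$ is a polynomial from a property of its finite differences. With the telescoping approach the subtlety is the lemma that summing a polynomial gives a polynomial of degree one higher; with the binomial-basis approach the subtlety is justifying that the recurrence's solution space has dimension exactly $k+1$, so that matching $\mathcal{P}_k$ inside it forces equality. Either way is short, but one must not simply assert the reverse direction from the degree-drop computation, which only directly yields the forward implication.
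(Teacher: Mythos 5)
Your proof is correct, and the forward direction coincides with the paper's (the discrete-derivative degree-drop argument); the converse, however, is handled by a genuinely different route. The paper writes $a_n = N^n a_0 = [1+(N-1)]^n a_0$, expands by the binomial theorem, and lets the hypothesis $(N-1)^{k+1}a_n=0$ truncate the sum to $a_n=\sum_{i=0}^{k}\binom{n}{i}(N-1)^i a_0$, an explicit polynomial of degree at most $k$; this one identity proves the converse and simultaneously yields the Newton forward-difference expansion recorded in the corollary that follows. Your two alternatives are both sound: the induction-plus-telescoping argument, and the nilpotency-plus-dimension-count argument, the latter being especially clean since the solution space of $(N-1)^{k+1}a_n=0$ has dimension exactly $k+1$ (a solution is determined by $a_0,\dots,a_k$) and contains the $(k+1)$-dimensional space $\mathcal{P}_k$ (a nonzero polynomial of degree at most $k$ cannot vanish at every nonnegative integer, so the inclusion is injective), forcing equality. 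Two caveats on the telescoping variant: the parenthetical appeal to Proposition~\ref{prop1} ``in the already-proven direction'' does not suffice, because the forward direction never concludes that anything \emph{is} a polynomial; and within this paper the sums-of-powers fact is Proposition~\ref{prop2}, which is itself deduced from the converse of Proposition~\ref{prop1}, so quoting it here would be circular --- you would need an independent proof, e.g. induction on the exponent via telescoping $(i+1)^{j+1}-i^{j+1}$. In short, the paper's argument buys an explicit closed form with no induction or dimension count, while yours buys a structural picture of $N-1$ as a nilpotent operator whose recurrence has $\mathcal{P}_k$ as its full solution space.
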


\begin{proof}
Assume $a_n$ is a polynomial in $n$ of degree $k$.
Observe that  $(N-1)a_n = a_{n+1}-a_n$ 
is a polynomial whose degree is reduced by (at least) 1. 

By applying the operator $(N-1)$ repeatedly $k+1$ times, we obtain 
the homogeneous linear recurrence relation of $a_n$, i.e.
\[  (N-1)^{k+1}a_n = 0.  \]
This completes the proof of the forward direction. For the reverse direction, suppose the condition $(N-1)^{k+1}a_n = 0$ holds. 
Consider 
\[ a_n = N^n a_0 = [1+(N-1)]^n a_0 = \sum_{i=0}^n \binom{n}{i}(N-1)^ia_0 .   \]
It follows from the assumption that 
\begin{equation}
\label{eq:an_polynomial}
a_n = \sum_{i=0}^k \binom{n}{i}(N-1)^ia_0.
\end{equation}
Hence, $a_n$ is a polynomial (in $n$) of degree at most $k.$
\end{proof} 

We obtain the following corollary immediately from \eqref{eq:an_polynomial}. Note that this has also been discussed in Proposition 1.4.2 of \cite{S2}.
\begin{cor}
A polynomial $a_n$ of degree at most $k$ can be written in expanded form as
\[
a_n = \sum_{i=0}^k \binom{n}{i}(N-1)^ia_0.
\]
\end{cor}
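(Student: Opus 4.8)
The plan is to read this off directly from the forward direction of Proposition~\ref{prop1} together with the binomial expansion of the shift operator that already appears in its proof. First I would invoke Proposition~\ref{prop1}: since $a_n$ has degree at most $k$, we have $(N-1)^{k+1}a_n = 0$ as an identity of sequences, and hence $(N-1)^{j}a_0 = 0$ for every $j \ge k+1$ — simply write $(N-1)^{j} = (N-1)^{j-k-1}(N-1)^{k+1}$ and note that the right factor annihilates the whole sequence, so in particular it annihilates the value at $n=0$.

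Next I would expand $a_n = N^{n}a_0 = \bigl(1 + (N-1)\bigr)^{n} a_0 = \sum_{i=0}^{n}\binom{n}{i}(N-1)^{i}a_0$, which is legitimate because $N$ commutes with the identity operator, so the ordinary binomial theorem applies to $N = 1 + (N-1)$. Then I would discard the terms with $i \ge k+1$ using the annihilation observed in the first step, which leaves exactly $a_n = \sum_{i=0}^{k}\binom{n}{i}(N-1)^{i}a_0$. Since each $\binom{n}{i}$ is a polynomial in $n$ of degree $i\le k$ and each $(N-1)^{i}a_0$ is a constant, this display simultaneously exhibits the claimed expansion and re-confirms polynomiality of degree at most $k$; this is precisely equation~\eqref{eq:an_polynomial}.

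There is essentially no hard step here: the statement is a bookkeeping corollary of \eqref{eq:an_polynomial}. The only point requiring a moment's care is the correct reading of the symbol $(N-1)^{i}a_0$ — it means ``apply the operator $(N-1)^{i}$ to the sequence $(a_n)_{n\ge 0}$, then evaluate at $n=0$'' — and I would make sure the reader sees that the truncation of the sum is justified by an \emph{operator} identity valid for all $n$, not by something merely checked at a single index. I would also add a one-line remark that this is not circular with the reverse direction of Proposition~\ref{prop1}: there the expansion was used to deduce polynomiality \emph{from} the recurrence, whereas here the already-proved forward implication is what lets us pin down the explicit coefficients $(N-1)^i a_0$.
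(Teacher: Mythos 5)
Your argument is correct and is essentially the paper's: the corollary is read off from the identity $a_n = N^n a_0 = \sum_{i=0}^{n}\binom{n}{i}(N-1)^{i}a_0$ together with the vanishing of $(N-1)^{i}a_0$ for $i \ge k+1$, which is exactly equation~\eqref{eq:an_polynomial} obtained in the proof of Proposition~\ref{prop1}, with the forward direction of that proposition supplying the hypothesis $(N-1)^{k+1}a_n=0$. Your extra remarks on the meaning of $(N-1)^{i}a_0$ and on non-circularity are sound but not needed beyond what the paper already does.
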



{\bf Example:}  Let  $a_n= \dfrac{n(n+1)(2n+1)}{6}.$
Then, $a_n$ can be written as $2\binom{n}{3}+3\binom{n}{2}+\binom{n}{1}.$ 

An important consequence of this observation is the following.
\begin{prop} \label{prop2}
For a non-negative integer $k$,
$\displaystyle \sum_{i=0}^n i^k$ is a polynomial of degree at most $k+1.$
\end{prop}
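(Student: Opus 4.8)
The plan is to apply Proposition \ref{prop1} in the easy direction: it suffices to show that the sequence $b_n := \sum_{i=0}^n i^k$ satisfies $(N-1)^{k+2} b_n = 0$, since Proposition \ref{prop1} (with $k$ replaced by $k+1$) then immediately gives that $b_n$ is a polynomial in $n$ of degree at most $k+1$.

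First I would compute a single application of the shift operator: $(N-1) b_n = b_{n+1} - b_n = (n+1)^k$. This is the crucial observation — taking one difference of the partial-sum sequence returns (a shift of) the summand. Then I would invoke Proposition \ref{prop1} in the forward direction applied to the polynomial $n \mapsto (n+1)^k$, which has degree $k$ in $n$: it tells us that $(N-1)^{k+1}$ annihilates $(n+1)^k$. Composing, $(N-1)^{k+2} b_n = (N-1)^{k+1}\bigl((N-1) b_n\bigr) = (N-1)^{k+1}(n+1)^k = 0$.

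Having established $(N-1)^{k+2} b_n = 0$, I would finish by citing the reverse direction of Proposition \ref{prop1}: any sequence annihilated by $(N-1)^{k+2}$ is a polynomial in $n$ of degree at most $k+1$. Hence $\sum_{i=0}^n i^k$ is such a polynomial, which is the claim.

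There is no real obstacle here; the only point that deserves a line of care is the degree bookkeeping — making sure the exponent on $(N-1)$ is $k+2$ and not $k+1$, which is exactly why the resulting degree bound is $k+1$ rather than $k$. One could alternatively give a self-contained induction on $k$ (writing $i^k$ in terms of lower powers via the summation and peeling off one power at a time), but routing everything through Proposition \ref{prop1} is cleaner and reuses machinery already in hand, so that is the route I would take.
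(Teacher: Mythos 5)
Your argument is correct and is essentially the paper's own proof: one difference gives $(N-1)b_n=(n+1)^k$, the forward direction of Proposition \ref{prop1} then yields $(N-1)^{k+2}b_n=0$, and the reverse direction gives the degree bound $k+1$. No issues.
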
 

\begin{proof}
This is easy to see. Let $\displaystyle a_n = \sum_{i=0}^n i^k$.
Then, $(N-1)a_n = a_{n+1}-a_n = (n+1)^k,$ a polynomial of degree $k.$
Hence, from Proposition \ref{prop1}, $(N-1)^{k+2}a_n=0$ and, by reverse observation 
of Proposition \ref{prop1}, $a_n$ is a polynomial of degree at most $k+1.$ 
\end{proof} 

\begin{rem} Proposition \ref{prop2} is very useful as knowing such a bound on the polynomial degree allows us to make guesses rigorous. In particular, finding a polynomial equation for $\displaystyle a_n = \sum_{i=0}^n i^k$, for some fixed $k$, amounts to fitting a polynomial of degree $k+1$ to a set of data points $a_n, \, n=0,1,2,\dots, k+1.$ 
\end{rem}

\item  {\bf   Generating function:} 

Every sequence corresponds to a generating function that comes in handy when determining the formula of the sequence, as we shall see later. Let us note that the generating function considered here is a \emph{formal} power series in the sense that it is regarded as an algebraic object, thereby ignoring the issue of convergence. The next proposition establishes a connection between polynomial sequences and the generating functions.

\begin{prop}
\label{prop:generating_polynomial}
Let $\displaystyle f(x) = \sum_{n=0}^{\infty} a_nx^n $
where $a_n$ is a polynomial in $n$ of degree $k$. 
Then,
\begin{equation}
\label{eq:f_polynomial}
    f(x) = \dfrac{P(x)}{(1-x)^{k+1}}, 
\end{equation}  
for some polynomial $P(x)$ of degree at most $k$.
\end{prop}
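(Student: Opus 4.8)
The plan is to leverage the structural result already established in Proposition~\ref{prop1} (and its corollary), namely that a polynomial sequence $a_n$ of degree at most $k$ can be written as $a_n = \sum_{i=0}^k \binom{n}{i} b_i$ where $b_i = (N-1)^i a_0$ are constants. Since generating functions respect linear combinations, it suffices to identify the generating function of the building-block sequence $n \mapsto \binom{n}{i}$ for each fixed $i$, and then sum.

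First I would recall (or prove in one line, by differentiating the geometric series $i$ times, or by the negative binomial expansion) the identity
\[
\sum_{n=0}^{\infty} \binom{n}{i} x^n = \frac{x^i}{(1-x)^{i+1}}.
\]
Then, substituting the expansion of $a_n$, I would write
\[
f(x) = \sum_{n=0}^\infty \left( \sum_{i=0}^k \binom{n}{i} b_i \right) x^n = \sum_{i=0}^k b_i \sum_{n=0}^\infty \binom{n}{i} x^n = \sum_{i=0}^k b_i \frac{x^i}{(1-x)^{i+1}}.
\]
Putting everything over the common denominator $(1-x)^{k+1}$ gives
\[
f(x) = \frac{\sum_{i=0}^k b_i\, x^i (1-x)^{k-i}}{(1-x)^{k+1}} = \frac{P(x)}{(1-x)^{k+1}},
\]
and each term $b_i x^i (1-x)^{k-i}$ in the numerator has degree at most $k$, so $P(x)$ has degree at most $k$, as claimed.

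An alternative (and perhaps cleaner) route avoids the explicit binomial identity altogether: apply the operator viewpoint directly to $f$. One shows that multiplying $f(x)$ by $(1-x)$ corresponds to the sequence of differences, and iterating, $(1-x)^{k+1} f(x)$ is the generating function of the sequence $(N-1)^{k+1} a_n$, which is identically zero beyond finitely many terms by Proposition~\ref{prop1}; hence $(1-x)^{k+1} f(x)$ is a polynomial, which is exactly $P(x)$, and a short degree count (the sequence $(N-1)^{k+1}a_n$ vanishes for all $n \ge 0$, while the lower-order difference corrections only contribute finitely many initial terms) pins down $\deg P \le k$. I expect the only mildly delicate point to be this degree bookkeeping — making precise why no power of $x$ beyond $x^k$ survives in the numerator — but this follows immediately from the first approach since each summand $b_i x^i(1-x)^{k-i}$ visibly has degree $\le k$. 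I would therefore present the first approach as the main argument.
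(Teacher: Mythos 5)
Your argument is correct, and your main route differs from the paper's. The paper proves the proposition by directly multiplying $f(x)$ by $(1-x)^{k+1}$, expanding the binomial, reindexing the double sum, and invoking Proposition~\ref{prop1} to show that every coefficient of $x^n$ with $n\ge k+1$ is $(N-1)^{k+1}a_{n-k-1}=0$; this is exactly the ``alternative route'' you sketch at the end, and it has the advantage that the same computation is reused verbatim for the converse (Proposition~\ref{prop:converse_polynomial}) and serves as the template for the $C$-finite analogue (Proposition~\ref{prop:generating_cfinite}), while also yielding the explicit formula \eqref{eq:generating_polynomial} for $P(x)$ in terms of the $a_n$. Your main argument instead goes through the corollary $a_n=\sum_{i=0}^k\binom{n}{i}(N-1)^ia_0$ together with the identity $\sum_{n\ge 0}\binom{n}{i}x^n = x^i/(1-x)^{i+1}$, and then clears denominators; this is equally valid, arguably more transparent about where the bound $\deg P\le k$ comes from (each summand $b_i x^i(1-x)^{k-i}$ visibly has degree $\le k$), and it produces $P(x)$ in a different but equally explicit closed form. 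The only point to make sure you state is that $\binom{n}{i}=0$ for $n<i$, so the generating-function identity for the building blocks starts the sum at $n=0$ without adjustment; with that noted, your proof is complete.
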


\begin{proof}
Assume $a_n$ is a polynomial in $n$ of degree $k$. Then,
\begin{align*}
(1-x)^{k+1}f(x) &= \sum_{n=0}^{\infty} a_nx^n(1-x)^{k+1}\\
&=  \sum_{n=0}^{\infty} \sum_{i=0}^{k+1} \binom{k+1}{i}(-1)^i a_nx^{n+i}\\
&= \sum_{i=0}^{k+1} \sum_{n=i}^{\infty} \binom{k+1}{i}(-1)^i a_{n-i}x^{n}\\
&=  \sum_{n=k+1}^{\infty} \sum_{i=0}^{k+1} \binom{k+1}{i}(-1)^i a_{n-i}x^{n}
+  \sum_{n=0}^{k} \sum_{i=0}^{n} \binom{k+1}{i}(-1)^i a_{n-i}x^{n}.\\
\end{align*}

The first summation is essentially zero as, for each $n \geq k+1$, 
\[
\displaystyle \sum_{i=0}^{k+1} \binom{k+1}{i}(-1)^i a_{n-i} = (N-1)^{k+1}a_{n-k-1} = 0,
\]
by Proposition \ref{prop1}. Hence,
\[ (1-x)^{k+1}f(x) = P(x), \]
where 
\begin{equation}
\label{eq:generating_polynomial}
   \displaystyle P(x) = \sum_{n=0}^{k} \sum_{i=0}^{n} \binom{k+1}{i}(-1)^i a_{n-i}x^{n}, 
\end{equation}
a polynomial of degree at most $k$.
\end{proof}

{\bf Example:} The generating function $f(x)$ for $\displaystyle a_n = \sum_{i=1}^n i^2$ is
$\dfrac{x^2+x}{(1-x)^4}.$

\begin{code}
> GenPol(n*(n+1)*(2*n+1)/6,n,x);
\end{code}



The next proposition states the converse of Proposition \ref{prop:generating_polynomial}.

\begin{prop}
\label{prop:converse_polynomial}
Assume that $\displaystyle f(x) = \sum_{n=0}^{\infty} a_nx^n $ satisfies
\[
    f(x) = \dfrac{P(x)}{(1-x)^{k+1}}, 
\]
for some polynomial $P(x)$ of degree $k$.
Then, $a_n$ is a polynomial sequence of degree at most $k$.
\end{prop}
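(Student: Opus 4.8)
The plan is to expand $1/(1-x)^{k+1}$ as a power series and read off the coefficient of $x^n$ in $f(x) = P(x)/(1-x)^{k+1}$. Recall the standard identity
\[
\frac{1}{(1-x)^{k+1}} = \sum_{m=0}^{\infty} \binom{m+k}{k} x^m,
\]
and note that $m \mapsto \binom{m+k}{k}$ is a polynomial in $m$ of degree exactly $k$ (its leading term is $m^k/k!$). Writing $P(x) = \sum_{j=0}^{k} p_j x^j$, I would multiply the two series and collect terms: for $n \geq k$,
\[
a_n = \sum_{j=0}^{k} p_j \binom{n-j+k}{k}.
\]

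The next step is to observe that each summand $p_j \binom{n-j+k}{k}$ is, as a function of $n$, a polynomial in $n$ of degree at most $k$ (it is $p_j$ times a shifted version of the degree-$k$ polynomial above), so the finite sum $a_n$ is also a polynomial in $n$ of degree at most $k$, valid for all $n \geq k$. Since $P$ has degree exactly $k$, one can further check that the leading coefficient of $a_n$ is $\bigl(\sum_j p_j\bigr)/k! = P(1)/k!$; this will have degree exactly $k$ provided $P(1) \neq 0$, but the statement only claims "at most $k$," so I need not dwell on this. A cleaner alternative, if I want to avoid binomial bookkeeping, is to apply the operator $(1-x)^{k+1}$ directly: from $(1-x)^{k+1} f(x) = P(x)$, comparing coefficients of $x^n$ for $n \geq k+1$ gives $\sum_{i=0}^{k+1}\binom{k+1}{i}(-1)^i a_{n-i} = 0$, i.e. $(N-1)^{k+1} a_{n-k-1} = 0$ for all such $n$, hence $(N-1)^{k+1} a_m = 0$ for all $m \geq 0$, and Proposition \ref{prop1} (reverse direction) immediately yields that $a_n$ is a polynomial of degree at most $k$.

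I expect the minor obstacle to be the indexing at small $n$: the power-series-multiplication argument only directly exhibits $a_n$ as a polynomial for $n \geq k$ (where the convolution $\sum_j p_j\binom{n-j+k}{k}$ has all its terms), so I should either note that finitely many initial values never obstruct "is a polynomial sequence" once the recurrence $(N-1)^{k+1}a_n=0$ holds for every $n$, or simply run the second argument above, which sidesteps the issue entirely by producing the global recurrence. I would most likely present the operator argument as the main proof, since it is the shortest and reuses Proposition \ref{prop1} verbatim, and mention the explicit binomial formula for $a_n$ as a remark for the reader who wants the closed form.
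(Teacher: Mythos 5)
Your main argument is exactly the paper's proof: multiply by $(1-x)^{k+1}$, compare coefficients of $x^n$ for $n\geq k+1$ to get $(N-1)^{k+1}a_{n-k-1}=0$, and invoke the reverse direction of Proposition~\ref{prop1}. The explicit binomial convolution you mention as a remark is a valid alternative, but since you would present the operator argument as the proof proper, your proposal coincides with the paper's approach and is correct.
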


\begin{proof}
Since 
\[
\displaystyle  (1-x)^{k+1}f(x) =  \sum_{n=k+1}^{\infty} \sum_{i=0}^{k+1} \binom{k+1}{i}(-1)^i a_{n-i}x^{n}
+  \sum_{n=0}^{k} \sum_{i=0}^{n} \binom{k+1}{i}(-1)^i a_{n-i}x^{n}
\]
is a polynomial of degree $k$, the first summation must be zero implying $(N-1)^{k+1}a_{n-k-1} =0$ for all $n\geq k+1$. Hence, from Proposition \ref{prop1}, we can conclude that $a_n$ is a polynomial of degree at most $k$. 
\end{proof}

\item  {\bf   Closure properties:} 
\begin{thm}
Assume that $a_n$ and $b_n$ are polynomial sequences of degree $k$ and $l$, respectively.
The following sequences are also polynomial sequences.
\begin{enumerate}[label=(\roman*)]
\item addition: $(a_n+b_n)_{n=0}^{\infty}$, degree at most $\max(k,l)$,
\item term-wise multiplication: $(a_n \cdot b_n)_{n=0}^{\infty}$, degree at most $k+l$,
\item Cauchy product: $(\sum_{i=0}^n a_i \cdot b_{n-i})_{n=0}^{\infty}$, degree at most $k+l+1$,
\item partial sum: $(\sum_{i=0}^n a_i)_{n=0}^{\infty}$, degree at most $k+1$,
\item linear subsequence: $(a_{mn})_{n=0}^{\infty}$, where $m$ is a positive integer, degree at most $k$. 
\end{enumerate}
\end{thm}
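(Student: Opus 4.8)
The plan is to prove each of the five closure properties by combining the elementary fact that polynomials of degree at most $d$ form a vector space closed under the listed operations with the degree bookkeeping. For (i), (ii), and (v) the statements are immediate: if $a_n$ and $b_n$ are restrictions to $\N$ of polynomials $A(n)$ and $B(n)$ of degrees $k$ and $l$, then $a_n+b_n$ is the restriction of $A+B$ (degree at most $\max(k,l)$), $a_n b_n$ is the restriction of $AB$ (degree exactly $k+l$, hence at most $k+l$), and $a_{mn}$ is the restriction of the polynomial $A(mn)$, which has the same degree $k$ in $n$. One subtlety worth a sentence: since a polynomial over $\C$ (or $\Q$) is determined by its values on infinitely many points, the "restriction" language is unambiguous, so these three parts need no real work beyond naming the witnessing polynomial.

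For (iv), the partial sum, I would simply invoke Proposition~\ref{prop2} (or rather its proof technique): if $a_n$ has degree $k$, then with $S_n = \sum_{i=0}^n a_i$ we have $(N-1)S_n = a_{n+1}$, a polynomial of degree $k$ in $n$, so by Proposition~\ref{prop1} applied to $a_{n+1}$ we get $(N-1)^{k+1}a_{n+1}=0$, hence $(N-1)^{k+2}S_n = 0$, and the reverse direction of Proposition~\ref{prop1} gives that $S_n$ is a polynomial of degree at most $k+1$. This is essentially a verbatim reuse of the argument for Proposition~\ref{prop2}, which handled the special case $a_n = n^k$.

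The one part that requires genuine (though still short) thought is (iii), the Cauchy product $c_n = \sum_{i=0}^n a_i b_{n-i}$. The cleanest route is via the operator calculus: I would show $(N-1)^{k+l+2} c_n = 0$ and then apply the reverse direction of Proposition~\ref{prop1}. To see the annihilation, note that the forward difference of a Cauchy product satisfies a Leibniz-type identity — writing $c_{n+1}-c_n$ and reindexing one finds $(N-1)c_n = \sum_{i=0}^{n} a_i\, b_{n+1-i} + a_{n+1} b_0$, which is again a convolution-type expression; iterating, each application of $(N-1)$ either differences one of the factors or produces boundary terms that are themselves convolutions of lower-degree sequences. A more transparent alternative, which I would actually use, is the generating-function argument: by Proposition~\ref{prop:generating_polynomial}, $\sum a_n x^n = P(x)/(1-x)^{k+1}$ and $\sum b_n x^n = Q(x)/(1-x)^{l+1}$ with $\deg P \le k$, $\deg Q \le l$; since the generating function of the Cauchy product is the product of the generating functions, $\sum c_n x^n = P(x)Q(x)/(1-x)^{k+l+2}$, and $\deg(PQ) \le k+l$, so Proposition~\ref{prop:converse_polynomial} immediately gives that $c_n$ is a polynomial of degree at most $(k+l+2)-1 = k+l+1$. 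This also re-derives (iv) as the special case $b_n \equiv 1$, which has generating function $1/(1-x)$.

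The main obstacle is not conceptual but organizational: deciding whether to present everything through the shift-operator lens (Propositions~\ref{prop1}, \ref{prop2}) or through generating functions (Propositions~\ref{prop:generating_polynomial}, \ref{prop:converse_polynomial}), since both toolkits are available. I would use the operator viewpoint for (i), (ii), (iv), (v) and the generating-function viewpoint for (iii), flagging that (iv) is subsumed by (iii). The only place one must be slightly careful with the degree count is (iii): the bound $k+l+1$ is the sharp one coming from $\deg(PQ)\le k+l$ and the $(1-x)^{k+l+2}$ denominator, and it is genuinely attained (e.g. $a_n = b_n = 1$ gives $c_n = n+1$ of degree $1 = 0+0+1$), so the "at most" in the statement is the best possible.
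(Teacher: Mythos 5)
Your proof is correct, but it takes a genuinely different route from the paper for the two nontrivial items. The paper, like you, dismisses \emph{(i)}, \emph{(ii)}, \emph{(v)} as straightforward, but it then derives \emph{both} \emph{(iii)} and \emph{(iv)} from Proposition \ref{prop2} alone: expanding $a_i b_{n-i}$ as a bivariate polynomial in $i$ and $n$ of total degree $k+l$, each monomial $i^p n^q$ contributes $\left(\sum_{i=0}^n i^p\right) n^q$, a polynomial of degree at most $p+1+q \le k+l+1$, and the partial sum \emph{(iv)} is the same reduction with $b_n$ absent. You instead handle \emph{(iv)} by rerunning the shift-operator argument directly on $a_n$ (from $(N-1)S_n=a_{n+1}$ conclude $(N-1)^{k+2}S_n=0$ and invoke the reverse direction of Proposition \ref{prop1}), and \emph{(iii)} via generating functions: $A(x)B(x)=P(x)Q(x)/(1-x)^{k+l+2}$ with $\deg(PQ)\le k+l$, then Proposition \ref{prop:converse_polynomial}. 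Both routes are valid; the paper's reduction to sums of powers is more elementary and stays entirely inside the operator/monomial toolkit, while your generating-function argument for \emph{(iii)} anticipates exactly the technique the paper later deploys for the $C$-finite closure properties (Theorem \ref{Cclose}) and subsumes \emph{(iv)} as the case $b_n\equiv 1$, as you note. One small point of care: Proposition \ref{prop:converse_polynomial} is stated for a numerator of degree (exactly) $k$, whereas $\deg(PQ)$ may be strictly smaller than $k+l$; its proof only uses that $(1-x)^{k+l+2}C(x)$ is a polynomial of degree at most $k+l+1$, so your application is sound, but it is worth flagging that you are using the proposition with ``at most'' in place of ``equal to.''
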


\begin{proof}
The proofs of claims \emph{(i)},\emph{(ii)}, and \emph{(v)}
are rather straightforward. 
Claims \emph{(iii)} and \emph{(iv)} follow from Proposition \ref{prop2} that 
$ \displaystyle \sum_{i=0}^n i^k,$ where $k$ is a non-negative integer, 
is a polynomial in $n$ of degree at most $k+1$.
\end{proof}
\end{enumerate}

\subsection{$C$-finite}
\begin{enumerate}[leftmargin=0.4in,label=\textbf{\arabic*.}]
\item  {\bf   Ansatz:} $a_n$ is defined by a homogeneous
linear recurrence with constant coefficients:
\[ a_{n+r} + c_{r-1}a_{n+r-1}+ c_{r-2}a_{n+r-2}+ \dots +c_{0}a_{n} = 0,\]
along with the initial values $a_0, a_1, \dots, a_{r-1}.$
We call $a_n$ a \emph{$C$-finite sequence} of order $r$.
\begin{rem}
From Proposition \ref{prop1}, a polynomial sequence is a special case of $C$-finite sequences.
\end{rem}

\item  {\bf   Example:} Let $ \displaystyle a_n = \left \lfloor{\left(\dfrac{n}{2}\right)^2}\right \rfloor.$ \;\
Here, $a_n$ satisfies the linear recurrence relation of order 4: 
\[ a_{n+4}-2a_{n+3}+2a_{n+1}-a_n=0,\]
with initial values $a_0=0, a_1=0, a_2=1$ and $a_3=2.$
In terms of the left shift operator,
\[   0 = (N^4-2N^3+2N-1) \cdot a_n = (N+1)(N-1)^3 \cdot a_n.  \]

\item  {\bf   Guessing:} 

Input: the order $r$ of linear recurrence and 
sequence $a_n$ (of length more than $2r$).
We solve the system of linear equations for $c_0, c_1, \dots, c_{r-1}$

\[  \begin{bmatrix}
c_0 & c_1 & \dots & c_{r-1} & 1
\end{bmatrix} 
\begin{bmatrix}
a_0 & a_1 & a_2 & \dots & a_{r-1}\\
a_1 & a_2 & a_3 & \dots & a_r\\
& & \dots \\
a_{r-1} & a_{r} & a_{r+1}  & \dots & a_{2r-2}\\
a_r & a_{r+1} & a_{r+2} & \dots & a_{2r-1}
\end{bmatrix} =
\begin{bmatrix}
0 & 0 & \dots & 0
\end{bmatrix}.
\]

This matrix equation can be solved quickly through, say, the reduced row echelon form method.
Once all these $c_i$'s are obtained, we check that the rest of the $a_n,  n > 2r-1$ satisfy the conjectured recurrence.

\begin{code}
> A:= [seq(floor((n/2)^2), n=0..30)];
> GuessC(A,N);
\end{code}



\item  {\bf   Generating function:} 

Let us denote by $T(N)$ an \emph{annihilator} of $a_n$, that is, $T(N)\cdot a_n = 0$. In line with Proposition \ref{prop:generating_polynomial} in the previous section, the following proposition establishes a relationship between $C$-finite sequences and generating functions.

\begin{prop}
\label{prop:generating_cfinite}
Let $\displaystyle f(x) = \sum_{n=0}^{\infty} a_nx^n $
where $a_n$ is a $C$-finite sequence of order $r$ with annihilator
$T(N)$, a polynomial in $N$ of degree $r$. Then,
\begin{equation}
\label{eq:f_cfinite}
    f(x) = \dfrac{P(x)}{x^rT(1/x)},
\end{equation}
for some polynomial $P(x)$ of degree at most $r-1$.
\end{prop}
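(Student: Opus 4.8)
The plan is to follow the same template as the proof of Proposition~\ref{prop:generating_polynomial}, with the operator $(N-1)^{k+1}$ replaced by the general annihilator $T(N)$. First I would write $T(N)=N^{r}+c_{r-1}N^{r-1}+\dots+c_{1}N+c_{0}$ (monic, since the defining recurrence of a $C$-finite sequence of order $r$ is monic) and record the elementary but crucial observation that the denominator appearing in the claim is the \emph{reciprocal polynomial}
\[
x^{r}T(1/x)=1+c_{r-1}x+c_{r-2}x^{2}+\dots+c_{1}x^{r-1}+c_{0}x^{r},
\]
a genuine polynomial in $x$ of degree $r$ whose constant term equals $1$. It is precisely this nonzero constant term that makes $x^{r}T(1/x)$ invertible in the ring of formal power series, so that the identity $f(x)=P(x)/\bigl(x^{r}T(1/x)\bigr)$ is meaningful once $P$ has been pinned down.

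Next I would expand the product $\bigl(x^{r}T(1/x)\bigr)f(x)$. Writing $x^{r}T(1/x)=\sum_{j=0}^{r}c_{r-j}x^{j}$ with the convention $c_{r}:=1$, and $f(x)=\sum_{n=0}^{\infty}a_{n}x^{n}$ with $a_{m}:=0$ for $m<0$, the coefficient of $x^{n}$ in the product is $\sum_{j=0}^{r}c_{r-j}a_{n-j}$. I would then split the series at $n=r$. For every $n\ge r$ all the indices $n-j$ are nonnegative, so after the substitution $i=r-j$ the coefficient becomes $\sum_{i=0}^{r}c_{i}a_{(n-r)+i}=T(N)a_{n-r}=0$ by the annihilator property. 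Hence every coefficient of $x^{n}$ with $n\ge r$ vanishes, and
\[
\bigl(x^{r}T(1/x)\bigr)f(x)=P(x),\qquad
P(x):=\sum_{n=0}^{r-1}\Bigl(\sum_{j=0}^{n}c_{r-j}\,a_{n-j}\Bigr)x^{n},
\]
a polynomial of degree at most $r-1$. Dividing both sides by $x^{r}T(1/x)$ yields the asserted formula.

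The routine content is the bookkeeping in the Cauchy product; the only point that needs care is the reindexing that identifies the coefficient $\sum_{j=0}^{r}c_{r-j}a_{n-j}$ with $T(N)$ applied to $a_{n-r}$, and the observation that $P(x)$ collects only the ``boundary'' contributions coming from the initial values $a_{0},\dots,a_{r-1}$, exactly in analogy with the polynomial $P(x)$ of Proposition~\ref{prop:generating_polynomial}. I do not expect a genuine obstacle: the degree bound on $P$ and the invertibility of the denominator are both immediate once the reciprocal polynomial is written out explicitly.
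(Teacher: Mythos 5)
Your proposal is correct and follows essentially the same route as the paper: expand $\bigl(x^{r}T(1/x)\bigr)f(x)$, split the resulting series at $n=r$, kill the tail using $T(N)a_{n-r}=0$, and read off $P(x)$ of degree at most $r-1$ from the boundary terms (your Cauchy-product bookkeeping with $a_{m}=0$ for $m<0$ is just a notational variant of the paper's reindexing). The only addition is your explicit remark that $x^{r}T(1/x)$ has constant term $1$ and is hence invertible as a formal power series, which the paper leaves implicit.
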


\begin{proof}
Assume that $a_n$ is a $C$-finite sequence with annihilator $T(N)$. 
Suppose further that $T(N) = c_rN^r+c_{r-1}N^{r-1}+\dots+c_1N+c_0,$ 
where $c_r=1$ and $c_{r-1},\dots, c_{1}, c_0$ are some constants. Then,
\begin{align*}
x^rT(1/x)f(x) &= \sum_{n=0}^{\infty} a_nx^{n}(c_0x^r+c_1x^{r-1}+\dots+c_r)\\
&=  \sum_{n=0}^{\infty}\sum_{i=0}^{r} c_i a_nx^{n+r-i} = \sum_{i=0}^{r} \sum_{n=r-i}^{\infty} c_i a_{n-r+i}x^{n} \\
& = \sum_{n=r}^{\infty} \sum_{i=0}^{r} c_i a_{n-r+i}x^{n}
 + \sum_{n=0}^{r-1} \sum_{i=r-n}^{r} c_i a_{n-r+i}x^{n}.
\end{align*}

The first sum is zero as, for each $n \geq r$, \\
$\displaystyle  \sum_{i=0}^{r} c_i a_{n-r+i} 
= \sum_{i=0}^{r} c_iN^ia_{n-r} = T(N) \cdot a_{n-r} = 0,$
by assumption of $T(N)$. Hence,
\[ x^rT(1/x)f(x) = P(x), \]
where $\displaystyle P(x) = \sum_{n=0}^{r-1} \sum_{i=r-n}^{r} c_i a_{n-r+i}x^{n}$,
a polynomial of degree at most $r-1$.
\end{proof}

{\bf Example:} The generating function $f(x)$ for $a_n 
= \left \lfloor{\left(\dfrac{n}{2}\right)^2}\right \rfloor$ is
$\dfrac{x^2}{(1+x)(1-x)^3}.$

\begin{code}
> GenC(N^4-2*N^3+2*N-1,[0,0,1,2],N,x);
\end{code}



The next proposition gives the converse of Proposition \ref{prop:generating_cfinite}. This proposition is very useful as it allows us to prove closure properties through generating functions which turned out to simplify our proof tremendously. In particular, the proposition implies that an upper bound for the order of a $C$-finite sequence can be determined by looking at the degree of the polynomial appearing in the denominator of \eqref{eq:f_cfinite}.

\begin{prop}
\label{prop:converse_cfinite}
Assume that $\displaystyle f(x) = \sum_{n=0}^{\infty} a_nx^n $ satisfies 
\[
    f(x) = \dfrac{P(x)}{x^rT(1/x)},
\]
for some polynomial $P(x)$ of degree $r-1$, and $T(N)$ is a polynomial in $N$ of degree $r$.
Then, $a_n$ is a $C$-finite sequence of order at most $r$.
\end{prop}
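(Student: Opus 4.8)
The plan is to reverse the computation in the proof of Proposition \ref{prop:generating_cfinite}. Write $T(N) = c_rN^r + c_{r-1}N^{r-1} + \dots + c_1 N + c_0$ with $c_r \neq 0$ (we may rescale so that $c_r = 1$, and note that the hypothesis ``$T$ has degree $r$'' is exactly $c_r \neq 0$, which is what makes $x^r T(1/x)$ have a nonzero constant term and hence be invertible as a formal power series). Starting from $f(x) = P(x)/\bigl(x^r T(1/x)\bigr)$, I would clear the denominator to get the formal power series identity $x^r T(1/x) f(x) = P(x)$.

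Next I would expand the left-hand side exactly as in the forward direction: since $x^r T(1/x) = c_0 x^r + c_1 x^{r-1} + \dots + c_r$, multiplying by $f(x) = \sum_{n \ge 0} a_n x^n$ and collecting the coefficient of $x^n$ gives, for every $n \ge r$, the quantity $\sum_{i=0}^{r} c_i a_{n-r+i} = T(N) \cdot a_{n-r}$, while for $0 \le n \le r-1$ one gets the truncated sums $\sum_{i=r-n}^{r} c_i a_{n-r+i}$. Because the right-hand side $P(x)$ is a polynomial of degree at most $r-1$, the coefficient of $x^n$ on the left must vanish for all $n \ge r$. That is precisely the statement $T(N) \cdot a_{n-r} = 0$ for all $n \ge r$, i.e.\ $T(N) \cdot a_m = 0$ for all $m \ge 0$. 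Hence $a_n$ satisfies the homogeneous linear recurrence with constant coefficients encoded by $T(N)$, so $a_n$ is $C$-finite of order at most $r$.

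I would also remark on the two mild technical points so the proof is watertight. First, the manipulation requires that $x^r T(1/x)$ be a unit in the ring of formal power series $\C[[x]]$; its constant term is $c_r \neq 0$, so this is fine, and the division in the hypothesis is well defined. Second, ``order at most $r$'' rather than exactly $r$ is the correct conclusion, since $T(N)$ could share the recurrence with a lower-order annihilator (e.g.\ if $P(x)$ and $x^r T(1/x)$ have a common factor); the statement is phrased accordingly and no sharpening is claimed.

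Honestly there is no serious obstacle here: the result is the exact converse of Proposition \ref{prop:generating_cfinite}, and the proof is the same coefficient bookkeeping read in the opposite direction — indeed the excerpt's displayed expansion of $x^r T(1/x) f(x)$ can be quoted verbatim. The only thing to be careful about is making explicit that the vanishing of the high-degree coefficients of a \emph{polynomial} on the right forces the recurrence on the left; this is the one-line crux of the argument, and it mirrors exactly the role played in Proposition \ref{prop:converse_polynomial}.
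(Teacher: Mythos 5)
Your argument is correct and follows essentially the same route as the paper: clear the denominator, expand $x^rT(1/x)f(x)$ as in Proposition \ref{prop:generating_cfinite}, and use the fact that $P(x)$ has degree at most $r-1$ to force $T(N)\cdot a_{n-r}=0$ for all $n\geq r$, so $T(N)$ annihilates $a_n$. Your added remarks on the invertibility of $x^rT(1/x)$ (nonzero constant term $c_r$) and on the ``order at most $r$'' phrasing are sensible refinements but do not change the substance of the paper's proof.
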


\begin{proof}
Since 
\[
\displaystyle  x^rT(1/x)f(x)= \sum_{n=r}^{\infty} \sum_{i=0}^{r} c_i a_{n-r+i}x^{n}
 + \sum_{n=0}^{r-1} \sum_{i=r-n}^{r} c_i a_{n-r+i}x^{n}
\]
is a polynomial of degree $r-1$, the first summation must be zero implying $T(N) \cdot a_{n-r} = 0$ for all $n\geq r$. Thus, $T(N)$ is an annihilator of $a_n$, and so $a_n$ is a $C$-finite sequence of order at most $r$. 
\end{proof}

\item  {\bf   Closure properties:} 

It will become evident later that knowing the operations under which the class of $C$-finite sequences is closed allows one to guess rigorously a formula of the resulting sequence. We first state and prove the following closure properties.

\begin{thm}     \label{Cclose}
Assume that $a_n$ and $b_n$ are $C$-finite sequences
of order $r$ and $s$, respectively.
The following sequences are also $C$-finite, with the specified upper bound on the order. 
\begin{enumerate}[label=(\roman*)]
\item addition: $(a_n+b_n)_{n=0}^{\infty}$,  order at most $r+s$,
\item term-wise multiplication: $(a_n \cdot b_n)_{n=0}^{\infty}$, order at most $rs$,
\item Cauchy product: $(\sum_{i=0}^n a_i \cdot b_{n-i})_{n=0}^{\infty}$,  order at most $r+s$,
\item partial sum: $(\sum_{i=0}^n a_i)_{n=0}^{\infty}$,  order at most $r+1$,
\item linear subsequence: $(a_{mn})_{n=0}^{\infty}$, where $m$ is a positive integer,
order at most $r$.
\end{enumerate}
\end{thm}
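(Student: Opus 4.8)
The plan is to split the five claims into two groups according to which tool is cheapest. For the three linear operations --- addition~(i), Cauchy product~(iii), and partial sum~(iv) --- I would argue entirely through generating functions, passing from sequences to rational functions by Proposition~\ref{prop:generating_cfinite} and back by Proposition~\ref{prop:converse_cfinite}. Write $f(x)=\sum_{n\ge 0}a_nx^n=P_a(x)/Q_a(x)$ and $g(x)=\sum_{n\ge 0}b_nx^n=P_b(x)/Q_b(x)$ with $Q_a(x)=x^rT_a(1/x)$ of degree $r$ and $Q_a(0)\neq 0$, $\deg P_a\le r-1$, and similarly $\deg Q_b=s$, $\deg P_b\le s-1$. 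Then $(a_n+b_n)$ has generating function $(P_aQ_b+P_bQ_a)/(Q_aQ_b)$, with denominator of degree $r+s$ and numerator of degree at most $r+s-1$; the Cauchy product has generating function $P_aP_b/(Q_aQ_b)$, denominator degree $r+s$, numerator degree at most $r+s-2$; and the partial sum has generating function $f(x)/\bigl(1-x\bigr)$, denominator degree $r+1$, numerator degree at most $r-1$. In each case Proposition~\ref{prop:converse_cfinite} yields the claimed order bound, and since a common factor of numerator and denominator can only cancel --- lowering the order --- the bounds still hold.

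For term-wise multiplication~(ii) and the linear subsequence~(v) the generating-function route is clumsy (a Hadamard product and a multisection), so I would work with the shift operator $N$ directly. For~(ii), let $V$ be the span of the finitely many sequences $\bigl(a_{n+i}b_{n+j}\bigr)_{n\ge 0}$ with $0\le i\le r-1$ and $0\le j\le s-1$, so that $\dim V\le rs$. Using $a_{n+r}=-\sum_{k=0}^{r-1}c_ka_{n+k}$ and the analogous relation for $b_n$, one checks that $N$ maps each spanning sequence back into $V$ (the only cases to treat are $i=r-1$ or $j=s-1$), so $V$ is $N$-stable; hence $(a_nb_n),N(a_nb_n),\dots,N^{rs}(a_nb_n)$ all lie in $V$ and are linearly dependent, which produces a recurrence of order at most $rs$. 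For~(v), writing $T_a(N)=\prod_i(N-\alpha_i)^{m_i}$ with $\sum_i m_i=r$ gives $a_n=\sum_i p_i(n)\alpha_i^n$ with $\deg p_i<m_i$, so $a_{mn}=\sum_i p_i(mn)\bigl(\alpha_i^m\bigr)^n$ is annihilated by $\prod_i\bigl(N-\alpha_i^m\bigr)^{m_i}$, a polynomial of degree at most $r$ once repeated factors are collapsed; equivalently, in a companion-matrix representation $a_n=u^{\top}A^nv$ one has $a_{mn}=u^{\top}(A^m)^nv$ with $A^m$ still $r\times r$.

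The only genuine obstacle is sharpness in~(ii): the naive generating-function argument shows only that the poles of the Hadamard product lie among the pairwise products of the poles of $f$ and $g$, and extracting exactly the bound $rs$ from the pole multiplicities is delicate, whereas the $N$-stable subspace $V$ --- equivalently, the Kronecker product $A\otimes B$ of the two companion matrices, which is $rs\times rs$ and satisfies $a_nb_n=(u\otimes p)^{\top}(A\otimes B)^n(v\otimes q)$ --- delivers it at once. Claims~(i), (iii), (iv), and~(v) are then just routine degree bookkeeping.
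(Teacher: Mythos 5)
Your proposal is correct, and for most of the theorem it follows the paper's own route: claims (i), (iii), (iv) are handled exactly as in the paper, by passing to rational generating functions via Proposition \ref{prop:generating_cfinite} and reading off the order bound from the denominator degree via Proposition \ref{prop:converse_cfinite}; and your $N$-stable subspace $V$ spanned by the $rs$ sequences $(a_{n+i}b_{n+j})$ for claim (ii) is the paper's ``solution space approach'' in slightly different clothing --- the paper writes $c_n,\dots,c_{n+rs}$ as constant-coefficient combinations of those same $rs$ products and extracts a left null vector of the resulting $(rs+1)\times rs$ matrix, which is precisely your linear-dependence argument (your Kronecker-product reformulation $a_nb_n=(u\otimes p)^{\top}(A\otimes B)^n(v\otimes q)$ is a nice compact way to say the same thing). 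The one place you genuinely diverge is (v): the paper treats the subsequence by the same null-space trick, expressing $c_n,\dots,c_{n+r}$ as constant combinations of $a_{mn},\dots,a_{mn+r-1}$ and invoking the $(r+1)\times r$ shape of the matrix, whereas you use the closed form of Proposition \ref{prop:close_cfinite} to exhibit the explicit annihilator $\prod_i(N-\alpha_i^m)^{m_i}$ (of degree exactly $r$; no collapsing of repeated factors is needed for the bound), or equivalently $a_{mn}=u^{\top}(A^m)^nv$ with Cayley--Hamilton. Your route buys an explicit annihilating polynomial in terms of the roots, while the paper's stays uniform with (ii) and never needs the roots; one small caveat is that the closed-form representation is delicate when $0$ is a root of the characteristic polynomial (terms $p(n)\,0^n$ only describe the tail of the sequence), but your companion-matrix formulation sidesteps this, so the argument stands.
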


\begin{proof}
The proofs for the closure properties are based on two different approaches, i.e. the generating function approach for proving \emph{(i)}, \emph{(iii)} and \emph{(iv)}, and the solution subspace approach for \emph{(ii)} and \emph{(v)}. 

\vspace{1em}


\textbf{Generating function approach}

To prove the closure properties of addition, Cauchy product and partial sum, let $A(x)$ and $B(x)$ be the generating functions of $a_n$ and $b_n$, respectively. Then, the generating functions $C(x)$ of $\displaystyle c_n = a_n+b_n, \sum_{i=0}^n a_i \cdot b_{n-i}$
and $\displaystyle \sum_{i=0}^n a_i$ are $\displaystyle A(x)+B(x), A(x)B(x)$ and $\displaystyle A(x)\cdot\dfrac{1}{1-x}$, respectively. 
By Proposition \ref{prop:converse_cfinite}, $c_n$ is a $C$-finite sequence whose order can be determined by looking at the degree of the polynomial appearing in the denominator of $C(x)$ in each case. \\
\end{proof}

We now give concrete examples of how the generating function approach can be used to verify the closure properties of \emph{(i)}, \emph{(iii)} and \emph{(iv)}. \vspace{1em}

{\bf Example:} Let $a_n = \left \lfloor{\left(\dfrac{n}{2}\right)^2}\right \rfloor$
and $b_n$ be the Fibonacci numbers.

We recall that $a_n$ satisfies the linear recurrence relation
\[ a_{n+4}-2a_{n+3}+2a_{n+1}-a_n=0, \]
with  $a_0=0, a_1=0, a_2=1$ and $a_3=2.$
In terms of the shift operator,  \[ (N+1)(N-1)^3 \cdot a_n = 0. \]

Also, $b_n$ satisfies the linear recurrence relation 
\[ b_{n+2} -b_{n+1}-b_{n} = 0, \] with $b_0=0$ and $b_1=1.$  
In terms of the shift operator,  \[ (N^2-N-1) \cdot b_n = 0. \]

The generating function of $a_n$ and $b_n$ are
$A(x) =  \dfrac{x^2}{(1+x)(1-x)^3}$, and 
$B(x) = \dfrac{x}{1-x-x^2}$, respectively.

\begin{itemize}[leftmargin=0.2in]
\item addition: $c_n=a_n+b_n$. Then,  \[ C(x) = A(x)+B(x)
= \dfrac{-x(x^4-x^3+x^2+x-1)}{(1+x)(1-x)^3(1-x-x^2)}.\]
That is, $c_n$ satisfies the linear recurrence of order 6: 
\[
    (N+1)(N-1)^3(N^2-N-1) \cdot c_n = 0.
\]
\item Cauchy product: $ c_n = \sum_{i=0}^n a_i \cdot b_{n-i}$.
Then,  \[ C(x) = A(x)B(x)
= \dfrac{x^3}{(1+x)(1-x)^3(1-x-x^2)}. \]
That is, $c_n$ satisfies the linear recurrence of order 6: 
\[ 
(N+1)(N-1)^3(N^2-N-1) \cdot c_n = 0.
\]
\item partial sum: $c_n = \sum_{i=0}^n a_i$ then    
     \[ C(x) = A(x)\cdot \dfrac{1}{1-x}
= \dfrac{x^2}{(1+x)(1-x)^4}. \]
That is, $c_n$ satisfies the linear recurrence of order 5: 
\[ 
(N+1)(N-1)^4 \cdot c_n = 0.
 \]
\end{itemize}

\begin{code}
> A := x^2/(1+x)/(1-x)^3:
> B := x/(1-x-x^2):
> CAddition(A,B,x);
> CCauchy(A,B,x);
> CParSum(A,x);
\end{code}


\textbf{Solution space approach} 

Before embarking on a proof for closure properties of linear subsequence and term-wise multiplication, we continue with the example from the previous section, intended to highlight the key steps of the solution space approach. A formal proof will be deferred to the end of this section.

\begin{itemize}[leftmargin=0.2in]
\item linear subsequence: $c_n = a_{2n}$. 
First, we apply the linear recurrence relation of $a_{2n}$, i.e.
$a_{2n+4}=2a_{2n+3}-2a_{2n+1}+a_{2n}$ 
repeatedly to yield
\[  \begin{bmatrix}
c_n\\
c_{n+1}\\
c_{n+2}\\
c_{n+3} \\
c_{n+4}
\end{bmatrix} =   
 \begin{bmatrix}
1 & 0 & 0 & 0 \\
0 & 0 & 1 & 0 \\
1 & $-2$ & 0 & 2  \\
4 & $-6$ & $-3$ & 6  \\
9 & $-12$ & $-8$ & 12  \\
\end{bmatrix}  \cdot
 \begin{bmatrix}
a_{2n}\\
a_{2n+1}\\
a_{2n+2}\\
a_{2n+3}
\end{bmatrix}.\]

A solution $P=(-1,3,-3,1,0)$ is in the left null space of matrix $M$ (in the middle), i.e. $P \cdot M = (0,0,0,0)$.
Hence, the linear recurrence of $c_n$ is
\[ -c_n+3c_{n+1}-3c_{n+2}+c_{n+3} = 0  \]
or in terms of the shift operator
\[  (N-1)^3\cdot c_n = 0.  \]

\begin{code}
> CSubSeq(2,N^4-2*N^3+2*N-1,N);
\end{code}

Note that the above program returns $(N-1)^3(Nd_4-1)\cdot c_n = 0$, where $d_4$ is a free variable. We can assign zero to $d_4$ and obtain the desired third order recurrence relation.\\



\item term-wise multiplication: $c_n = a_n \cdot b_n$.
We apply the linear recurrence relation of $a_n$ and $b_n$, i.e.
$a_{n+4}=2a_{n+3}-2a_{n+1}+a_n$ and 
$b_{n+2} =b_{n+1}+b_{n}$ repeatedly 
to yield the system of relations
\begin{align*}
a_nb_n &= 1a_nb_n + 0a_nb_{n+1} + 0a_{n+1}b_{n} + 0a_{n+1}b_{n+1}
+ \dots + 0a_{n+3}b_{n+1}, \\
a_{n+1}b_{n+1} &= 0a_nb_n + 0a_nb_{n+1} + 0a_{n+1}b_{n} + 1a_{n+1}b_{n+1}
+ \dots + 0a_{n+3}b_{n+1}, \\
& \vdots  \\
a_{n+8}b_{n+8} &= 117a_nb_n + 189a_nb_{n+1} -156a_{n+1}b_{n} -252a_{n+1}b_{n+1}
+ \dots + 252a_{n+3}b_{n+1}.
\end{align*}

We put this system of equations in the matrix form as
\[  \begin{bmatrix}
c_n\\
c_{n+1}\\
c_{n+2}\\
c_{n+3}\\
c_{n+4}\\
c_{n+5}\\
c_{n+6}\\
c_{n+7} \\
c_{n+8}
\end{bmatrix} =   
 \begin{bmatrix}
1 & 0 & 0 & 0 & 0 & 0 & 0 & 0\\
0 & 0 & 0 & 1 & 0 & 0 & 0 & 0\\
0 & 0 & 0 & 0 & 1 & 1 & 0 & 0\\
0 & 0 & 0 & 0 & 0 & 0 & 1 & 2\\
2 & 3 & -4 & -6 & 0 & 0 & 4 & 6\\
6 & 10 & -9 & -15 & -6 & -10 & 12 & 20\\
20 & 32 & -30 & -48 & -15 & -24 & 30 & 48\\
48 & 78 & -64 & -104 & -48 & -78 & 72 & 117\\
117 & 189 & -156 & -252 & -104 & -168 & 156 & 252
\end{bmatrix}  \cdot
 \begin{bmatrix}
a_nb_n\\
a_{n}b_{n+1}\\
a_{n+1}b_{n}\\
a_{n+1}b_{n+1}\\
a_{n+2}b_{n}\\
a_{n+2}b_{n+1}\\
a_{n+3}b_{n}\\
a_{n+3}b_{n+1}
\end{bmatrix}.\]

A non-trivial solution  $(1,2,-4,-8,5,8,-4,-2,1)$ is in the left null space of matrix $M$ (in the middle). 
Hence, the linear recurrence of $c_n$ is
\[ c_n+2c_{n+1}-4c_{n+2}-8c_{n+3}+5c_{n+4}+8c_{n+5}-4c_{n+6}-2c_{n+7}+c_{n+8} = 0  \]
or in terms of the shift operator
\[  (N^2+N-1)(N^2-N-1)^3\cdot c_n = 0.  \]
\end{itemize}

\begin{code}
> CTermWise(N^4-2*N^3+2*N-1,N^2-N-1,N);
\end{code}



\vspace{1em}
We shall now give a formal proof to the closure properties of linear subsequence and term-wise multiplication in the spirit of the last two examples.

\begin{proof}
For the case of the linear subsequence with a fixed positive integer $m$, 
let $c_n=a_{mn}$. Then $c_{n}, c_{n+1},\dots, c_{n+r}$
can be put in the system of linear equations as
\[  \begin{bmatrix}
c_n\\
c_{n+1}\\
c_{n+2}\\
\dots \\
c_{n+r} 
\end{bmatrix} =   
 \begin{bmatrix}
1 & 0 & 0 & \dots &  \\
&&&\dots \\
M_{r-1}^{(1)} & M_{r-1}^{(2)} & M_{r-1}^{(3)} &   \dots &  &  &  &  \\
M_{r}^{(1)} & M_{r}^{(2)} & M_{r}^{(3)} &   \dots &  &  &  &  
\end{bmatrix}  \cdot
 \begin{bmatrix}
a_{mn}\\
a_{mn+1}\\
a_{mn+2}\\
\dots\\
a_{mn+r-1}
\end{bmatrix}.\]
The constant matrix $M$ (in the middle) has $r+1$ rows
and $r$ columns, which guarantees a non-trivial null space, i.e. there exists a solution $P\neq 0$ such that $P \cdot M = [0\, 0 \cdots 0]$.
This solution $P$ provides a $C$-finite recurrence relation
to $c_n, c_{n+1},\dots, c_{n+r}.$
 
As for the term-wise multiplication, 
 let $c_n=a_n\cdot b_n$. Then, $c_n, c_{n+1},\dots, c_{n+rs}$
can be put in the system of linear equations as
\[  \begin{bmatrix}
c_n\\
c_{n+1}\\
c_{n+2}\\
\dots \\
c_{n+rs-1} \\
c_{n+rs}
\end{bmatrix} =   
 \begin{bmatrix}
1 & 0 & 0 & \dots & 0 & 0 & 0 & \dots \\
0 & 0 & 0 & \dots & 0 & 1 & 0 & \dots \\
&&&\dots \\
M_{rs-1}^{(1)} & M_{rs-1}^{(2)}& M_{rs-1}^{(3)} &   \dots &  &  &  &  \\
M_{rs}^{(1)} & M_{rs}^{(2)}& M_{rs}^{(3)} &   \dots &  &  &  &  
\end{bmatrix}  \cdot
 \begin{bmatrix}
a_nb_n\\
a_{n}b_{n+1}\\
\dots\\
a_{n+1}b_n\\
a_{n+1}b_{n+1}\\
\dots\\
a_{n+r-1}b_{n+s-1}
\end{bmatrix}.\]
The constant matrix $M$ (in the middle) has $rs+1$ rows
and $rs$ columns, which again guarantees a 
non-trivial solution $P$ in the null space of $M$. 
This solution $P$ gives a $C$-finite recurrence relation to $c_n,c_{n+1},\dots, c_{n+rs}.$
\end{proof}

\textbf{Rigorous proof of identities with the closure properties}

The closure properties from Theorem \ref{Cclose} are extremely useful in verifying identities of $C$-finite sequences. 
For example, let $c_n = a_n^2$, where 
$a_n = \left \lfloor{\left(\dfrac{n}{2}\right)^2}\right \rfloor$.
Let us say we want to show that $c_n$ satisfies a linear recurrence
\[  c_{n+8} = 2c_{n+7}+2c_{n+6}-6c_{n+5}+6c_{n+3}-2c_{n+2}-2c_{n+1}+c_{n},   \;\ n \geq 0, \]
i.e. $(N+1)^3(N-1)^5 \cdot c_n = 0.$
Knowing that $a_n$ satisfies the linear recurrence of order 4, Theorem \ref{Cclose}: term-wise multiplication guarantees
that $c_n$ satisfies the linear recurrence of order at most $4\cdot4 = 16.$
It then suffices to verify the identity only by checking the numeric values for $n=0,1,2,\dots,15$
as this is adequate to determine a $C$-finite recurrence of order 16.

Let us give another example. Consider the same sequence $a_n$, but this time we want to verify a non-linear identity
\[  a_{n+1}-a_na_{n+1}+a_na_{n+2}+a_{n+1}^2-a_{n+1}a_{n+2}=0,  \;\ n \geq 0. \]
We define $d_n = a_{n+1}-a_na_{n+1}+a_na_{n+2}+a_{n+1}^2-a_{n+1}a_{n+2}$ for $n \geq 0.$
The closure properties of $C$-finite sequence ensure that the order of $d_n$ will be at most
$4+4^2+4^2+4^2+4^2=68.$ So to prove that $d_n=0,$ for $n \geq 0$,
we only check the initial values of $d_n$ for $n=0,1,2,\dots,67$.

\item  {\bf   Closed-form solutions:} 

In contrast to polynomial sequences that we started with an ansatz in a closed-form solution (and later derived a linear recurrence relation for it), our ansatz for a $C$-finite sequence was initially defined as a recurrence relation. A closed-form representation will now be given in the following proposition. 

\begin{prop}
\label{prop:close_cfinite}
Assume that $a_n$ satisfies the linear recurrence with constant coefficients,
\[  0 = T(N) \cdot a_n = [(N-\alpha_1)^{k_1} (N-\alpha_2)^{k_2}
\dots (N-\alpha_m)^{k_m}]\cdot a_n,\]
where $\alpha_i $, the $i$-th root of the characteristic polynomial of the recurrence, has multiplicity $k_i,\, i = 1, 2,\dots, m$.  
Then, a closed-form formula of $a_n$ is given by
\begin{equation}
\label{eq:closed_form_cfinite}
    a_n = \sum_{i=1}^m(c_{i,0}+c_{i,1}n+\dots+c_{i,k_i-1}n^{k_i-1})\alpha_i^n, 
\end{equation}   
for some constants $c_{i,0}, c_{i,1}, \dots, c_{i,k_i-1}, 
\;\ i=1,2,\dots,m,$ which can be determined by the initial conditions.
\end{prop}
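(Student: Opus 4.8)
The plan is to identify the full solution space of the recurrence $T(N)\cdot a_n=0$ and then read off \eqref{eq:closed_form_cfinite} as a basis expansion. Let $r:=\deg T=\sum_{i=1}^m k_i$, which is the order of the recurrence. First I would check that each of the $r$ sequences $n^j\alpha_i^n$ (with $1\le i\le m$ and $0\le j\le k_i-1$) is annihilated by $T(N)$. The clean way is the substitution $a_n=\alpha^n b_n$: a direct computation gives $(N-\alpha)\cdot(\alpha^n b_n)=\alpha^{n+1}(b_{n+1}-b_n)=\alpha^{n+1}\bigl((N-1)\cdot b_n\bigr)$, and iterating yields $(N-\alpha)^{k}\cdot(\alpha^n b_n)=\alpha^{n+k}\bigl((N-1)^{k}\cdot b_n\bigr)$ (here we may assume $0$ is not a characteristic root, so each $\alpha_i\ne 0$; if it were, one first strips off the factor $N^{\,k}$, which only shifts the sequence). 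By Proposition \ref{prop1}, $(N-1)^{k_i}\cdot b_n=0$ exactly when $b_n$ is a polynomial in $n$ of degree at most $k_i-1$, so $(N-\alpha_i)^{k_i}$ annihilates $n^j\alpha_i^n$ for $j\le k_i-1$; since the factors of $T(N)$ are polynomials in $N$ and hence commute, $T(N)$ annihilates these sequences too.

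Next I would pin down the dimension of the solution space. Writing the (monic) recurrence as $a_{n+r}=-\sum_{j=0}^{r-1}c_j a_{n+j}$, any vector $(a_0,\dots,a_{r-1})\in\C^r$ extends uniquely to a solution and every solution restricts to such a vector, so the solution space has dimension exactly $r=\sum_i k_i$.

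The heart of the argument is linear independence of the $r$ sequences $n^j\alpha_i^n$, which I would prove by induction on the number $m$ of distinct roots. Suppose $\sum_{i=1}^m p_i(n)\alpha_i^n=0$ for all $n$, where $\deg p_i\le k_i-1$ and the $\alpha_i$ are distinct and nonzero; the goal is $p_i\equiv 0$ for all $i$. Apply $(N-\alpha_m)^{k_m}$ to the whole relation. This kills the $i=m$ term, while for $i<m$ one computes $(N-\alpha_m)\cdot\bigl(p_i(n)\alpha_i^n\bigr)=\bigl(\alpha_i p_i(n+1)-\alpha_m p_i(n)\bigr)\alpha_i^n$, whose polynomial factor has the \emph{same} degree as $p_i$ because its leading coefficient is that of $p_i$ times $\alpha_i-\alpha_m\ne 0$; iterating, $(N-\alpha_m)^{k_m}$ turns $p_i(n)\alpha_i^n$ into $\widetilde p_i(n)\alpha_i^n$ with $\deg\widetilde p_i=\deg p_i$, so this operator is injective on each block. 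The inductive hypothesis applied to the $m-1$ surviving roots forces $\widetilde p_i\equiv 0$, hence $p_i\equiv 0$, for $i<m$; substituting back into the original relation leaves $p_m(n)\alpha_m^n\equiv 0$, so $p_m\equiv 0$ as well. The base case $m=1$ is immediate since a polynomial vanishing at all non-negative integers is zero.

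Finally, the $r$ linearly independent solutions $n^j\alpha_i^n$ span the $r$-dimensional solution space, hence form a basis; in particular the given $a_n$ is a unique linear combination of them, which is precisely \eqref{eq:closed_form_cfinite}. The constants $c_{i,j}$ are recovered by matching $n=0,1,\dots,r-1$: the resulting linear system has a confluent (generalized) Vandermonde coefficient matrix, invertible exactly because these sequences are linearly independent. I expect the linear independence step to be the only part requiring genuine work; the other steps are routine once Proposition \ref{prop1} is available.
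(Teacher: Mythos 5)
Your proposal is correct, and its first half coincides with the paper's argument: the paper also verifies that each block $P(n)\alpha_i^n$ with $\deg P\le k_i-1$ is killed by $(N-\alpha_i)^{k_i}$ via the same degree-reduction computation (you route it through the substitution $a_n=\alpha^n b_n$ and Proposition \ref{prop1}, the paper computes $(N-\alpha)\cdot[P(n)\alpha^n]=Q(n)\alpha^{n+1}$ with $\deg Q<\deg P$ directly; these are the same idea). Where you genuinely go further is the completeness step: the paper simply remarks that the formula ``is the most general form as the number of independent solutions equals the degree of $T(N)$,'' without proving that the $r=\sum_i k_i$ sequences $n^j\alpha_i^n$ are linearly independent or that the solution space has dimension exactly $r$. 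You supply both: the dimension count via the initial-value map $(a_0,\dots,a_{r-1})$, and the induction on the number of distinct roots in which applying $(N-\alpha_m)^{k_m}$ annihilates the $\alpha_m$-block while preserving the degree of each other block (leading coefficient scaled by $\alpha_i-\alpha_m\ne 0$). This closes the gap the paper leaves implicit, and also explains why the constants are \emph{uniquely} determined by the initial conditions (your confluent Vandermonde remark), something the paper only asserts. Your aside on the root $\alpha=0$ is likewise a point the paper passes over silently. So: same mechanism for sufficiency, but your proof is the more complete one; the paper's buys brevity at the cost of taking the independence of the basis solutions on faith.
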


\begin{proof}
We will show that $\displaystyle T(N) \cdot \sum_{i=1}^m(c_{i,0}+c_{i,1}n+\dots+c_{i,k_i-1}n^{k_i-1})\alpha_i^n = 0.$
This can be done by showing that each part of the sum is 0, i.e. for each $i$,
\[ (N-\alpha_i)^{k_i} \cdot [(c_{i,0}+c_{i,1}n+\dots+c_{i,k_i-1}n^{k_i-1})\alpha_i^n] = 0.\]
Notice that (dropping the $i$ subscript for simplicity)
\[ (N-\alpha)\cdot [P(n)\alpha^n] = P(n+1)\alpha^{n+1}-  P(n)\alpha^{n+1} = Q(n)\alpha^{n+1}, \]
where $Q(n)$ has degree less than $P(n)$. 
Therefore, if we apply $(N-\alpha)$ for $k$ times to $P(n)\alpha^n$
where $P(n)$ is a polynomial of degree $k-1$, we will get 0, and the result is now immediate. 
We note that this formula of $a_n$ is the most general form 
as the number of independent solutions equals the degree of $T(N)$.
\end{proof}

{\bf Example:} A closed-form formula for 
$a_n = \left \lfloor{\left(\dfrac{n}{2}\right)^2}\right \rfloor$ is
$\dfrac{n^2}{4}-\dfrac{1}{8}+\dfrac{(-1)^n}{8}.$

\begin{code}
> CloseC(N^4-2*N^3+2*N-1,[0,0,1,2],0,N,n);
\end{code}



The next proposition, which is the converse of the previous proposition, ensures that any sequence expressed in the form \eqref{eq:closed_form_cfinite} is also a $C$-finite sequence. This proposition will be used in a later section to prove some results for the $C^2$-finite sequences.

\begin{prop}
\label{prop:converse_closeform_cfinite}
Suppose that 
\[  a_n = \sum_{i=1}^m(c_{i,0}+c_{i,1}n+\dots+c_{i,k_i-1}n^{k_i-1})\alpha_i^n,  \]
for some constants $c_{i,0}, c_{i,1}, \dots, c_{i,k_i-1}, \text{and } \alpha_i, 
\;\ i=1,2,\dots,m,$.
Then, $a_n$ satisfies a linear recurrence with constant coefficients, i.e $a_n$ is a $C$-finite sequence. Moreover, the order of $a_n$ is $k_1+k_2+\dots+k_m$.
\end{prop}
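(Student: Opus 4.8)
The plan is to exhibit an explicit annihilator and then argue that it is minimal. Set
\[
T(N) \;=\; \prod_{i=1}^m (N-\alpha_i)^{k_i},
\]
a polynomial in $N$ of degree $k_1+k_2+\dots+k_m$. First I would verify that $T(N)\cdot a_n = 0$. Since the operators $(N-\alpha_i)^{k_i}$ commute with one another, it suffices to check that for each $i$ the single factor $(N-\alpha_i)^{k_i}$ annihilates the corresponding summand $(c_{i,0}+c_{i,1}n+\dots+c_{i,k_i-1}n^{k_i-1})\alpha_i^n$. But this is exactly the computation already performed in the proof of Proposition \ref{prop:close_cfinite}: applying $(N-\alpha_i)$ to a polynomial of degree $e$ times $\alpha_i^n$ produces a polynomial of degree $e-1$ times $\alpha_i^n$, so $k_i$ applications to a polynomial of degree $k_i-1$ give zero. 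Hence $T(N)$ is an annihilator of $a_n$, and therefore $a_n$ is a $C$-finite sequence of order at most $k_1+k_2+\dots+k_m$.

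It remains to show the order is exactly $k_1+\dots+k_m$. For this I would make explicit the non-degeneracy assumptions that the statement tacitly requires, namely that the $\alpha_i$ are pairwise distinct and that each polynomial coefficient genuinely has degree $k_i-1$, i.e. $c_{i,k_i-1}\neq 0$. Let $S(N)$ be a monic annihilator of $a_n$ of minimal degree. If $S(N)$ did not divide $T(N)$, then $\gcd(S,T)$ would be an annihilator (writing $\gcd = uS+vT$ and applying both sides to $a_n$) of strictly smaller degree, a contradiction; so $S(N)\mid T(N)$, meaning $S(N)=\prod_{i=1}^m (N-\alpha_i)^{j_i}$ with $0\le j_i\le k_i$. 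I want to conclude $j_i=k_i$ for every $i$.

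Suppose instead that $j_\ell<k_\ell$ for some $\ell$. Applying $S(N)$ to $a_n$ summand by summand, I would use two elementary identities: $(N-\alpha)\cdot\big(P(n)\alpha^n\big)=\alpha^{n+1}\big(P(n+1)-P(n)\big)$, which is a polynomial of degree $\deg P-1$ times $\alpha^n$, whereas for $\beta\neq\alpha$ one has $(N-\beta)\cdot\big(P(n)\alpha^n\big)=\alpha^n\big(\alpha P(n+1)-\beta P(n)\big)$, of degree $\deg P$ times $\alpha^n$ (its leading coefficient being $(\alpha-\beta)$ times that of $P$, hence nonzero). Consequently $S(N)\cdot\big(P_i(n)\alpha_i^n\big)=Q_i(n)\alpha_i^n$, where $Q_i\equiv 0$ when $j_i=k_i$ and $Q_i$ is a \emph{nonzero} polynomial of degree $k_i-1-j_i$ when $j_i<k_i$. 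Therefore $S(N)\cdot a_n=\sum_{i:\,j_i<k_i}Q_i(n)\alpha_i^n$, which is not the zero sequence because the functions $n^{\,j}\alpha_i^n$ (over distinct $\alpha_i$ and $j\ge 0$) are linearly independent. This contradicts the fact that $S(N)$ annihilates $a_n$, so $j_i=k_i$ for all $i$, giving $S(N)=T(N)$ and order $k_1+\dots+k_m$.

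The two shift identities and the ideal/divisibility bookkeeping are routine; the one genuine point requiring care is the linear independence of the sequences $n^{\,j}\alpha_i^n$ for distinct $\alpha_i$ — a standard fact, provable by a Vandermonde/generalized-eigenvector argument for the shift operator — together with stating the non-degeneracy hypotheses on the representation, since without "the $\alpha_i$ distinct" and "$c_{i,k_i-1}\neq 0$" the order can genuinely be smaller than $k_1+\dots+k_m$.
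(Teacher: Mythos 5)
Your first paragraph is exactly the paper's proof: the paper simply says to reuse the computation from Proposition \ref{prop:close_cfinite} to show that $\prod_{i=1}^m (N-\alpha_i)^{k_i}$ annihilates $a_n$, and then takes this recurrence of order $k_1+\dots+k_m$ (together with that many initial values) as the asserted $C$-finite description of $a_n$; in other words, the paper reads the ``order is $k_1+\dots+k_m$'' clause as the order of the recurrence it constructs, not as a minimality claim, and stops there. Where you genuinely diverge is the second half: you prove that this order is \emph{exact}, via the standard ideal-theoretic step (a minimal monic annihilator $S(N)$ must divide $T(N)$, by the Bézout/gcd argument) plus the linear independence of the sequences $n^{j}\alpha_i^n$, and you correctly flag that this stronger conclusion requires non-degeneracy hypotheses the statement leaves tacit (the $\alpha_i$ pairwise distinct, $c_{i,k_i-1}\neq 0$; you should also exclude $\alpha_i=0$, since $n^{j}0^n$ is eventually zero and the independence argument, and hence exactness, can fail there). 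So your proposal subsumes the paper's argument and additionally repairs/sharpens a claim the paper only asserts; the cost is the extra hypotheses and the independence lemma, the benefit is that ``order'' then really means minimal order, which the paper's one-line justification does not deliver.
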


\begin{proof}
Adopt the same steps followed in the proof of Proposition \ref{prop:close_cfinite} to get 
\[ [(N-\alpha_1)^{k_1} (N-\alpha_2)^{k_2}
\dots (N-\alpha_m)^{k_m}]\cdot a_n=0.\]
This relation together with initial values $a_n$ of length $k_1+k_2+\dots +k_m$ defines a $C$-finite recurrence relation for $a_n$.
\end{proof}
\end{enumerate}
\subsection{Holonomic}

\begin{enumerate}[leftmargin=0.4in,label=\textbf{\arabic*.}]
\item  {\bf   Ansatz:} $a_n$ is defined by a linear recurrence with polynomial coefficients:
\[ p_r(n)a_{n+r} + p_{r-1}(n)a_{n+r-1}+ \dots +p_{0}(n)a_{n} = 0, \]
where  $p_r(n) \neq 0,  n=0,1,2, \dots$,
along with the initial values $a_0, a_1, \dots, a_{r-1}.$
We call $a_n$ a \emph{holonomic sequence} of order $r$
and degree $k$, where $k$ is the highest degree amongst the
polynomial $p_r(n), p_{r-1}(n), \dots, p_0(n).$

It is important to note that the condition $p_r(n) \neq 0$ for the leading coefficient is necessary for recursively computing the term $a_{n+r}$ in the sequence from its predecessors. In case of violation of the condition, the relation will be valid for $a_{n}, n>n_0$ where $n_0$ is the largest positive integer root of the equation $p_r(n)=0$.

\item  {\bf   Example 1:} Let $ \displaystyle a_n =\dfrac{1}{n+1}\binom{2n}{n}$, the Catalan numbers. \;\
Here, $a_n$ satisfies a holonomic recurrence of order 1 and degree 1: 
\[ (4n+2)a_n-(n+2)a_{n+1}=0,\]
with initial values $a_0=1.$
In terms of the shift operator,
\[  [(4n+2)-(n+2)N] \cdot a_n = 0.  \]

{\bf Example 2:} Let $ \displaystyle a_n =\sum_{i=1}^n \dfrac{1}{i}$, the harmonic numbers. \;\
Here, $a_n$ satisfies a holonomic recurrence of order 2 and degree 1: 
\[ (n+1)a_n-(2n+3)a_{n+1}+(n+2)a_{n+2} =0,\]
with initial values $a_1=1, a_2=\frac{3}{2}.$
In terms of the shift operator,
\[  [n+1-(2n+3)N+(n+2)N^2] \cdot a_n = 0.  \]

{\bf Example 3:} Let $ \displaystyle a_n = \left \lfloor{\left(\dfrac{n}{2}\right)^2}\right \rfloor,$
from the previous section. 
Here, $a_n$ also satisfies a holonomic recurrence of order 2 and degree 1: 
\[ (n+2)a_n+2a_{n+1}-na_{n+2} =0.\]
This example illustrates the trade-off between lower order and higher degree compared to the $C$-finite recurrence in the previous section.

\item  {\bf   Guessing:} 

Input: the order $r$ and degree $k$ of linear recurrence and 
sequence $a_n$ (of length more than $(r+1)(k+1)-1+r$).
Let us write $p_i(n) = \sum_{j=0}^k c_{i,j}n^j$,
$C_i = \begin{bmatrix}
c_{i,0}, c_{i,1}, \dots, c_{i,k}
\end{bmatrix}$ and 
$P_n = \begin{bmatrix}
1, n, n^2, \dots, n^k
\end{bmatrix}.$ 

In  matrix notation, the system of linear equations for $c_{i,j},  0 \leq i \leq r, 0 \leq j \leq k$ takes the following form:
\[  \begin{bmatrix}
C_0, C_1, \dots, C_r
\end{bmatrix} \cdot 
\begin{bmatrix}
P_0^Ta_0 & P_1^Ta_1 & \dots & P_{(r+1)(k+1)-1}^Ta_{(r+1)(k+1)-1}\\
P_0^Ta_1 & P_1^Ta_2 & \dots & P_{(r+1)(k+1)-1}^Ta_{(r+1)(k+1)}\\
& & \dots \\
P_0^Ta_{r} & P_1^Ta_{r+1}  & \dots & P_{(r+1)(k+1)-1}^Ta_{(r+1)(k+1)-1+r}\\
\end{bmatrix} =
\begin{bmatrix}
0 & 0 & \dots & 0
\end{bmatrix}.
\]

A non-trivial solution $c_{i,j}$'s can be found quickly
using e.g. the reduced row echelon form method.
Once we get all these $c_{i,j}$'s, we check with
the rest of the $a_n,  n > (r+1)(k+1)-1+r$, that they satisfy the conjectured recurrence.

\begin{code}
> A:= [seq(add(1/i,i=1..n),n=1..35)];
> GuessHo(A,2,1,1,n,N);
\end{code}

{\bf Exercise:} The sequence 1, 1, 5, 23, 135, 925, 7285, 64755, 641075, 6993545, 83339745,..., appearing in the abstract, is not contained in the OEIS database. Convince yourself that this sequence has a recurrence relation of the form \[a_{n+2} = (n+3)a_{n+1}+(n+2)a_{n} \] with $a_0=a_1=1$. \vspace{1em}

\textbf{Guessing turns into Rigorous Proof}

We note that guessing can turn into a rigorous proof,
if we happen to know the order and degree of 
the relation of the holonomic sequence, as illustrated in the following example. The subject of finding bounds of the order and degree will be discussed throughout this section.

{\bf Example:} Given that we know $a_n$ is a 
holonomic sequence of order 2 and degree 3, then $a_n$ must satisfy the recurrence relation
\begin{align*}
& (c_{2,0}+c_{2,1}n+c_{2,2}n^2+c_{2,3}n^3)\cdot a_{n+2} 
+ (c_{1,0}+c_{1,1}n+c_{1,2}n^2+c_{1,3}n^3)\cdot a_{n+1} \\
& + (c_{0,0}+c_{0,1}n+c_{0,2}n^2+c_{0,3}n^3)\cdot a_{n}   =0, 
\end{align*}
for some unknown $c_{i,j}, 0 \leq i \leq 2, 0 \leq j \leq 3$. 
To find the holonomic relation, we simply fit this equation to some data $a_n$, where we need at least  $a_n$, $n=0,1,\dots,13$ to solve for the 12 unknowns.

\item  {\bf   Generating function:} 
\begin{thm}  \label{GenHolo}
Let $\displaystyle f(x) = \sum_{n=0}^{\infty} a_nx^n $
where $a_n$ is a holonomic sequence of order $r$ and degree $k$.
Then, $f(x)$ satisfies the (non-homogeneous) 
linear differential equation with polynomial coefficients
\begin{equation}    \label{diff}
q_0(x)f(x)+ q_1(x)f'(x)+ \dots + q_{r'}(x)f^{(r')}(x) = R(x),  
\end{equation}
where order $r'$ is at most $k$, degree of the coefficient $q_t(x)$ for each $t$
is at most $r+k$, and degree of the polynomial $R(x)$ is at most $r-1.$
\end{thm}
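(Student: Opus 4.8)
The plan is to carry out, for the holonomic case, the same kind of generating-function bookkeeping that produced Propositions \ref{prop:generating_polynomial} and \ref{prop:generating_cfinite}; the one new ingredient is that multiplying the coefficient sequence by $n$ corresponds to applying the operator $x\frac{d}{dx}$ to the generating function. Write $D=\frac{d}{dx}$ and record two ``dictionary'' rules: (a) if $g(x)=\sum_{n\ge0}b_nx^n$ then $(xD)^jg(x)=\sum_{n\ge0}n^jb_nx^n$, so that $p(xD)\,g(x)=\sum_{n\ge0}p(n)b_nx^n$ for any polynomial $p$; and (b) $\sum_{n\ge0}a_{n+i}x^n=x^{-i}\bigl(f(x)-\sum_{j=0}^{i-1}a_jx^j\bigr)$ as a formal Laurent series (on which $xD$ still acts diagonally on monomials). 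Applying (a) with $p=p_i$ and (b) to the holonomic recurrence $\sum_{i=0}^r p_i(n)a_{n+i}=0$ turns it into
\[
0=\sum_{n\ge0}\Bigl(\sum_{i=0}^r p_i(n)a_{n+i}\Bigr)x^n=\sum_{i=0}^r p_i(xD)\Bigl[x^{-i}\bigl(f(x)-\textstyle\sum_{j<i}a_jx^j\bigr)\Bigr].
\]

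Next I would clear the negative powers by multiplying through by $x^r$, using the conjugation identity $xD\cdot x^{-i}=x^{-i}(xD-i)$, equivalently $x^r\,p_i(xD)\,x^{-i}=x^{r-i}\,p_i(xD-i)$, to pull all powers of $x$ outside the operators. The contributions of the initial-value polynomials $\sum_{j<i}a_jx^j$ collapse to a genuine polynomial: since $xD$ is diagonal on monomials, $x^r p_i(xD)[x^{-i}x^j]=p_i(j-i)\,x^{r+j-i}$ with $0\le r+j-i\le r-1$, so these terms sum to a polynomial $R(x)$ with $\deg R\le r-1$. What survives is exactly the asserted equation, in the form
\[
\sum_{i=0}^r x^{r-i}\,p_i(xD-i)\,f(x)=R(x).
\]

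Finally I would read off the three bounds by expanding $(xD)^j=\sum_{\ell}S(j,\ell)\,x^{\ell}D^{\ell}$ (Stirling numbers of the second kind), valid for $0\le\ell\le j\le k$. Then each operator $x^{r-i}p_i(xD-i)$ is a linear combination, over $0\le\ell\le k$, of operators of the shape $(\text{constant})\cdot x^{r-i+\ell}D^{\ell}$; collecting the full sum by the order $\ell$ of the derivative gives coefficients $q_\ell(x)=\sum_{i=0}^r\gamma_{i,\ell}\,x^{r-i+\ell}$ for suitable constants $\gamma_{i,\ell}$. Hence the highest derivative occurring is some $r'\le k$, each $q_\ell(x)$ has $x$-degree at most $r+\ell\le r+k$, and $\deg R\le r-1$, which is the statement. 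As a sanity check, when $k=0$ the operators $p_i(xD-i)$ are constants, the equation degenerates to $\bigl(\sum_i c_{i,0}x^{r-i}\bigr)f(x)=R(x)$, and we recover Proposition \ref{prop:generating_cfinite}.

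I expect the main difficulty to be bookkeeping rather than conceptual: justifying the use of the formal Laurent series $x^{-i}(f-\cdots)$ (legitimate once one multiplies by $x^r$), verifying the conjugation identity $x^r p_i(xD)x^{-i}=x^{r-i}p_i(xD-i)$, and then pinning down all three degree/order bounds simultaneously from the final expansion while making sure no subtle off-by-one slips in on the range of exponents $r-i+\ell$ or on $\deg R$.
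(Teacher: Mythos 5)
Your argument is correct and every step checks out: the dictionary rules, the conjugation identity $x^r p_i(xD)x^{-i}=x^{r-i}p_i(xD-i)$, the collapse of the initial-value terms to a polynomial of degree at most $r-1$, and the Stirling expansion all hold, and the bounds you extract are exactly those of Theorem \ref{GenHolo}. The overall strategy coincides with the paper's (translate the recurrence into a relation for $f$, multiply by $x^r$, absorb the finitely many boundary terms into $R(x)$), but the translation mechanism differs. The paper works directly with the series: writing $p_t(n)=\sum_s b_{s,t}n^s$, it determines, for each pair $(s,t)$, constants $c^{(s,t)}_j$ with $\sum_j c^{(s,t)}_j(n+t)_j=n^s$ by solving a triangular linear system, so that $\sum_n n^s a_{n+t}x^n$ becomes a combination of $x^{j-t}f^{(j)}(x)$ plus boundary terms. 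You phrase the same conversion in operator language: $p_i(xD)$ acts diagonally, the conjugation handles the shift $n\mapsto n+i$, and $(xD)^j=\sum_{\ell}S(j,\ell)x^{\ell}D^{\ell}$ performs precisely the same power-to-falling-factorial change of basis, now with explicit constants. Your packaging makes the three bounds immediately readable from the exponents $r-i+\ell$ (and in fact gives the slightly sharper statement $\deg q_{\ell}\le r+\ell$), while the paper's coefficient-matching is more elementary and mirrors how its Maple implementation computes the $c^{(s,t)}_j$. The points you flagged as potential difficulties (the Laurent-series bookkeeping before multiplying by $x^r$, the conjugation identity, the exponent ranges for $R$) all go through as you describe, so there is no gap.
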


\begin{defi}
The generating function $f(x)$ of a holonomic sequence is called \emph{holonomic}.
Also, $f(x)$ satisfying \eqref{diff} is called \emph{$D$-finite} (differentiably finite),
see \cite{S}.
\end{defi}

\begin{proof}
Assume that $a_n$ satisfies the relation   
\[ 
p_r(n)a_{n+r} + p_{r-1}(n)a_{n+r-1}+ \dots +p_{0}(n)a_{n} = 0.
\]

We denote by $b_{s,t}$ the coefficient of $a_{n+t}n^s$ in the relation above, that is,
$\displaystyle p_t(n)=\sum_{s=0}^kb_{s,t}n^s$, and so the holonomic relation becomes 
\begin{equation} 
\label{holo8}
\sum_{t=0}^r\sum_{s=0}^k b_{s,t}n^sa_{n+t}=0.
\end{equation}

To prove our results, first we note the following identity. For fixed $s$ and $t$, 
\begin{align*}
\sum_{j=0}^s c^{(s,t)}_jx^{j-t}f^{(j)}(x) 
&=  \sum_{j=0}^s c^{(s,t)}_j  \sum_{n=0}^{\infty} (n)_ja_nx^{n-t} \\
&=  \sum_{j=0}^s c^{(s,t)}_j  \sum_{n=-t}^{\infty} (n+t)_ja_{n+t}x^{n} \\
&=  \sum_{n=-t}^{\infty} \left[ \sum_{j=0}^s c^{(s,t)}_j (n+t)_j \right] a_{n+t}x^{n},
\end{align*}
where $(n)_j$ is a falling factor, i.e. $(n)_j = n(n-1)\dots(n-j+1)$, and $c^{(s,t)}_j$'s are some constants.

For each pair of $(s,t)$, we appeal to the method of equating the coefficients to obtain $c^{(s,t)}_j, \;\ j=0,1,2,\dots,s$.
Equating the corresponding coefficients of $n^j$ in the equation $\displaystyle \sum_{j=0}^s c^{(s,t)}_j (n+t)_j = n^s$ results in the system of $s+1$  linear equations with $s+1$ unknowns, and so the unknown constants  $c^{(s,t)}_j$ can be determined.  

Next, we define $\displaystyle A_{s,t}(x) = \sum_{n=0}^{\infty} a_{n+t}n^sx^n$ for $s,t \geq 0.$ Then,
\begin{equation} \label{tank} 
\sum_{j=0}^s c^{(s,t)}_jx^{j-t}f^{(j)}(x) 
=  \sum_{n=-t}^{\infty} a_{n+t}n^sx^{n}
= A_{s,t}(x) + \sum_{n=-t}^{-1} a_{n+t}n^sx^{n}.
\end{equation}

From the holonomic relation of $a_n$ in \eqref{holo8}, 
multiply $x^n$ through and sum $n$ from $0$ to $\infty$:
\begin{align*} 
0 &= \sum_{n=0}^\infty\sum_{t=0}^r\sum_{s=0}^k b_{s,t}n^sa_{n+t}x^n= \sum_{t=0}^r\sum_{s=0}^k b_{s,t}A_{s,t}(x) \\
&= \sum_{t=0}^r\sum_{s=0}^k b_{s,t} 
\left[\sum_{j=0}^s c^{(s,t)}_jx^{j-t}f^{(j)}(x)
- \sum_{n=-t}^{-1} a_{n+t}n^sx^{n} \right] , \;\  \text{ from \eqref{tank}.}
\end{align*}
Multiplying $x^r$ on both sides and rearranging this equation, we obtain
\[    \sum_{t=0}^r\sum_{s=0}^k b_{s,t} 
\sum_{j=0}^s c^{(s,t)}_jx^{j+r-t}f^{(j)}(x)
= \sum_{t=0}^r\sum_{s=0}^k b_{s,t} 
\sum_{n=0}^{t-1} a_{n}(n-t)^sx^{n+r-t} . \]
Observe that the left hand side is the differential equation
of order at most $k$ and degree at most $r+k$, while the right hand side is the polynomial of degree at most $r-1$ as desired.
\end{proof}

{\bf Example 1:} Let $a_n=n!$. Then, $a_n$ satisfies the holonomic recurrence
\[  a_{n+1}-(n+1)a_n =0, \;\ a_0=1. \]
The differential equation corresponding to its generating function is
\[  (1-x)f(x) -x^2f'(x) = 1.  \]

\begin{code}
> HoToDiff(n+1-N,[1],n,N,x,D);
\end{code}



{\bf Example 2:} Let $ \displaystyle a_n =\dfrac{1}{n+1}\binom{2n}{n}$,
which satisfies the holonomic recurrence
\[ (4n+2)a_n-(n+2)a_{n+1}=0.\]
The differential equation corresponding to its generating function is
\[  (1-2x)f(x) +(x-4x^2)f'(x) = 1.  \]

It is well-known that a closed-form generating function of $f(x)$ is  $\dfrac{1-\sqrt{1-4x}}{2x}.$ 
The reader can easily verify that this expression of $f(x)$ satisfies the above differential equation. \\

\begin{code}
> HoToDiff(4*n+2-(n+2)*N,[1],n,N,x,D);
\end{code}



{\bf Example 3:} Let $ \displaystyle a_n = \left \lfloor{\left(\dfrac{n}{2}\right)^2}\right \rfloor$
with a holonomic recurrence given by
\[ (n+2)a_n+2a_{n+1}-na_{n+2} =0.\]
The differential equation corresponding to its generating function is
\[  2(1+x+x^2)f(x) +(-x+x^3)f'(x) = 0.  \]
On the other hand, another holonomic ($C$-finite) recurrence of $a_n$ is
\[ a_{n+4}-2a_{n+3}+2a_{n+1}-a_n=0,\]
which leads to the (zero-order) differential equation relation
\[  (1+x)(1-x)^3f(x) = x^2.  \]
The reader is encouraged to check that $f(x)$ from the zero-order relation satisfies the first-order differential equation. 

\begin{code}
> HoToDiff(n+2+2*N-N^2*n,[0,0],n,N,x,D);
> HoToDiff(1-2*N+2*N^3-N^4,[0,0,1,2],n,N,x,D);
\end{code}



A nonhomogeneous differential equation in \eqref{diff} 
can be transformed into a homogeneous one by differentiating multiple 
times until the polynomial $R(x)$ on the right hand side becomes zero. We state this fact in the following corollary.

\begin{cor}
\label{cor:homogeneous_holonomic}
Let $\displaystyle f(x) = \sum_{n=0}^{\infty} a_nx^n $
where $a_n$ is a holonomic sequence of order $r$ and degree $k$.
Then, $f(x)$ satisfies a \textit{homogeneous}
linear differential equation with polynomial coefficients,
\begin{equation}   
\label{eq:homoDiff_holo}
q_0(x)f(x)+ q_1(x)f'(x)+ \dots + q_{r'}(x)f^{(r')}(x) = 0,  
\end{equation}
where order $r'$ is at most $r+k$, and degree of $q_t(x)$ for each $t$
is at most $r+k$.
\end{cor}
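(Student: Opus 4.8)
The plan is to start from the nonhomogeneous differential equation \eqref{diff} furnished by Theorem \ref{GenHolo}, namely
\[ q_0(x)f(x)+q_1(x)f'(x)+\dots+q_{r'}(x)f^{(r')}(x)=R(x), \]
with $r'\le k$, $\deg q_t\le r+k$ for every $t$, and $\deg R\le r-1$, and simply to differentiate both sides $r$ times with respect to $x$. Since $R(x)$ is a polynomial of degree at most $r-1$, its $r$-th derivative vanishes, so the right-hand side becomes $0$ and we are left with a homogeneous relation of the form \eqref{eq:homoDiff_holo}. Everything then reduces to tracking how the order of the differential operator and the degrees of its polynomial coefficients evolve under repeated differentiation.

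For the order: one differentiation sends a summand $q_j(x)f^{(j)}(x)$ to $q_j'(x)f^{(j)}(x)+q_j(x)f^{(j+1)}(x)$, so after collecting terms according to the order of the derivative of $f$, the top order rises by exactly one, coming from the term $q_{r'}(x)f^{(r'+1)}(x)$. Hence after $r$ differentiations the order is at most $r'+r\le k+r$, as claimed. For the degrees: after one differentiation the new coefficient of $f^{(j)}$ has the form $q_{j-1}(x)+q_j'(x)$ (with the obvious conventions at the two ends of the range), and since $\deg q_j'<\deg q_j\le r+k$, the maximal coefficient degree does not increase. Iterating $r$ times, all coefficients still have degree at most $r+k$. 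I would package these two bookkeeping facts as a short induction on the number of differentiations, with induction hypothesis ``after $m$ differentiations the order is $\le r'+m$ and every coefficient has degree $\le r+k$''.

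The one point that deserves a little care — and the natural candidate for the main obstacle — is precisely this degree estimate: when the product rule redistributes terms among the various $f^{(j)}$, one must be sure no coefficient's degree creeps above $r+k$. This is exactly guaranteed by the fact that differentiation strictly lowers the degree of each $q_t$, so the bound $r+k$ is stable under the operation and survives all $r$ steps. A minor remark worth including is that a vanishing leading coefficient at some stage can only decrease the order, so all of the stated inequalities remain valid as upper bounds; likewise, if the differentiated equation should degenerate, a lower-order homogeneous equation still does the job within the same bounds.
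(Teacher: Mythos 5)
Your proposal is correct and matches the paper's own (very brief) justification: the paper likewise obtains the corollary by differentiating the nonhomogeneous equation \eqref{diff} enough times (here $r$ times, since $\deg R\le r-1$) until the right-hand side vanishes. Your explicit bookkeeping — the order increases by one per differentiation, giving $r'+r\le k+r$, while the coefficient degrees stay bounded by $r+k$ because differentiation strictly lowers polynomial degree — correctly fills in the details the paper leaves implicit.
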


\begin{rem}
Theorem 7.1.2 of \cite{S2} appears to contain a typographical error in the bound of the order $r'$ of the homogeneous linear differential equation \eqref{eq:homoDiff_holo}, having $k$ specified as a bound therein as opposed to $r+k$.
\end{rem}

\textbf{Example:} The homogeneous differential equation of $\displaystyle f(x) = \sum_{n=0}^{\infty} n!x^n$ is
\[x^2f''(x)+(3x-1)f'(x)+f(x)=0.\]

\begin{code}
> HoToDiffHom(n+1-N,[1],n,N,x,D);
\end{code}

The next theorem, which is the converse of Corollary \ref{cor:homogeneous_holonomic}, ensures that one can always establish a holonomic recurrence relation for the coefficients $a_n$ of $f(x)$ satisfying a homogeneous linear differential equation with polynomial coefficients.

\begin{thm}
\label{thm:generating_holonomic}
Let $\displaystyle f(x) = \sum_{n=0}^{\infty} a_nx^n. $
Assume $f(x)$ satisfies a homogeneous 
linear differential equation with polynomial coefficients
of order $r$ and degree $k$,
\begin{equation}    \label{diff2}
q_0(x)f(x)+ q_1(x)f'(x)+ \dots + q_{r}(x)f^{(r)}(x) = 0.  
\end{equation}
Then, $a_n$ is a holonomic sequence of order at most $r+k$ and degree 
at most $r$.
\end{thm}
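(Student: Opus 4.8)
The plan is to run the argument of Theorem~\ref{GenHolo} in reverse: rather than converting a recurrence into a differential equation, I would expand the differential equation~\eqref{diff2} into a formal power series in $x$ and read off the recurrence that its coefficients are forced to satisfy.

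First I would record the two elementary expansions. Writing $q_t(x)=\sum_{j=0}^{k}q_{t,j}x^{j}$ and
\[
f^{(t)}(x)=\sum_{m\ge 0}(m+t)(m+t-1)\cdots(m+1)\,a_{m+t}\,x^{m}=\sum_{m\ge 0}(m+t)_{t}\,a_{m+t}\,x^{m},
\]
where $(u)_t=u(u-1)\cdots(u-t+1)$ is the falling factorial, the product $q_t(x)f^{(t)}(x)$ has $x^{m}$-coefficient $\sum_{j=0}^{\min(k,m)}q_{t,j}\,(m-j+t)_{t}\,a_{m-j+t}$. I would then extract the coefficient of $x^{n+k}$ in~\eqref{diff2} for $n\ge 0$; since $\deg q_t\le k\le n+k$, the constraint $j\le m$ is automatic and every index $m-j+t=n+k-j+t$ lies in $\{n,\dots,n+k+r\}$, so no undefined term occurs. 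Equating this coefficient to $0$ yields, for all $n\ge 0$,
\[
\sum_{t=0}^{r}\sum_{j=0}^{k}q_{t,j}\,(n+k-j+t)_{t}\,a_{n+k-j+t}=0.
\]

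Next I would collect the terms according to the shift $\ell:=k-j+t$, which runs over $0,\dots,r+k$ as $j$ runs over $0,\dots,k$ and $t$ over $0,\dots,r$. Since $n+k-j+t=n+\ell$ whenever $k-j+t=\ell$, the relation becomes $\sum_{\ell=0}^{r+k}p_\ell(n)\,a_{n+\ell}=0$ with
\[
p_\ell(n)=\sum_{t=\max(0,\ell-k)}^{\min(r,\ell)}q_{t,\,k+t-\ell}\,(n+\ell)_{t}.
\]
Each $(n+\ell)_t$ is a polynomial in $n$ of degree $t\le r$, so every $p_\ell$ has degree at most $r$; and the top shift is $\ell=r+k$, with $p_{r+k}(n)=q_{r,0}\,(n+r+k)_{r}$. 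This is exactly a holonomic recurrence of order at most $r+k$ and degree at most $r$, which is the assertion.

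The one point requiring care --- and really the only obstacle --- is the leading-coefficient requirement in the definition of a holonomic sequence. If $q_{r,0}\neq 0$, then $p_{r+k}(n)=q_{r,0}(n+r+k)_{r}$ has no nonnegative integer root and the recurrence genuinely computes $a_{n+r+k}$ from its predecessors; if $q_{r,0}=0$ the effective order drops, and one simply repeats the bookkeeping with the largest $\ell$ for which $p_\ell\not\equiv 0$ (still $\le r+k$). In the borderline case where that leading polynomial has a positive integer root, the recurrence still holds for all sufficiently large $n$, so --- exactly as remarked after the holonomic ansatz --- together with finitely many initial values it still exhibits $a_n$ as a holonomic sequence within the stated bounds. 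I would close by noting that, as this manipulation is the precise inverse of the one in Theorem~\ref{GenHolo}, the mirrored bounds ($r\leftrightarrow k$, with $r+k$ preserved) fall out automatically.
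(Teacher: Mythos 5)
Your proposal is correct and follows essentially the same route as the paper's proof: expand $q_t(x)f^{(t)}(x)$ as a power series, equate the coefficient of a fixed power of $x$ to zero, and observe that the resulting relation has shifts up to $r+k$ with coefficients $(n+\ell)_t$ of degree at most $r$. Your version merely shifts the extraction to $x^{n+k}$ (avoiding the $\min(n,k)$ truncation the paper carries) and adds the leading-coefficient caveat, which the paper leaves implicit.
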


\begin{proof}
For $0 \leq t \leq r$, let $b_{s,t}$ be the coefficient of $x^sf^{(t)}(x)$ so that $\displaystyle  q_t(x) = \sum_{s=0}^kb_{s,t}x^s.$
We note that, for each $s$ and $t$,
\[ x^sf^{(t)}(x) = \sum_{n=t}^{\infty} (n)_ta_nx^{n-t+s}
= \sum_{n=s}^{\infty} (n+t-s)_ta_{n+t-s}x^{n} . \]

Now \eqref{diff2} can be written as
\[ \sum_{t=0}^r \sum_{s=0}^{k} 
\sum_{n=s}^{\infty}  b_{s,t}(n+t-s)_ta_{n+t-s}x^{n} = 0 .   \]
Then, for each $\, n \geq 0, \, a_n$ satisfies the following recurrence
\[ \sum_{t=0}^r \sum_{s=0}^{\min(n,k)} b_{s,t}(n+t-s)_ta_{n+t-s} = 0,\]
and the claim about the bounds on the order and degree follows immediately.
\end{proof}

\item  {\bf   Closure properties:} 

\begin{thm}  \label{hoclose}
Assume that $a_n$ and $b_n$ are holonomic sequences
of order $r$ and $s$, respectively.
The following sequences are also holonomic sequences, with the specified upper bound on the order.
\begin{enumerate}[label=(\roman*)]
\item addition: $(a_n+b_n)_{n=0}^{\infty},$ of order at most $r+s,$
\item term-wise multiplication: $(a_n \cdot b_n)_{n=0}^{\infty},$ of order at most $rs,$
\item partial sum: $(\sum_{i=0}^n a_i)_{n=0}^{\infty}$,  of order at most $r+1,$
\item linear subsequence: $(a_{mn})_{n=0}^{\infty}$, where $m$ is a positive integer,
of order at most $r.$
\end{enumerate}

Furthermore, let $c_n=\sum_{i=0}^na_i\cdot b_{n-i}$ be the Cauchy product of $a_n$ and $b_n$. 
Assume that the generating functions of $a_n$ and $b_n$, denoted by $A(x)$ and $B(x)$, satisfy homogeneous differential equations of orders $r_1$ and $r_2$, respectively. Then, the generating function of $c_n$ also satisfies a homogeneous differential equation of order at most $r_1r_2$. 
\end{thm}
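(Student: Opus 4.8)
The plan is to establish the four sequence-level assertions and the Cauchy-product assertion with the two devices already used for $C$-finite sequences: a dimension count over a field of rational functions, and a direct composition of recurrence operators. The starting point is a reduction lemma. If $a_n$ is holonomic of order $r$ with annihilator $L_a(n,N)=\sum_{i=0}^r p_i(n)N^i$, then dividing its recurrence by the nonzero leading polynomial $p_r(n)$ expresses $a_{n+r}$ as a $\mathbb{C}(n)$-linear combination of $a_n,\dots,a_{n+r-1}$; iterating (using the recurrence at shifted arguments) shows that \emph{every} forward shift $a_{n+j}$, $j\ge0$, lies in the $\mathbb{C}(n)$-span $V_a$ of $\{a_n,\dots,a_{n+r-1}\}$, so $\dim_{\mathbb{C}(n)}V_a\le r$, and likewise $\dim_{\mathbb{C}(n)}V_b\le s$ for $b_n$.

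Granting this, parts \emph{(i)}, \emph{(ii)}, \emph{(iv)} are counting arguments in the spirit of the left-null-space construction in the proof of Theorem~\ref{Cclose}, now with rational-function entries. For \emph{(i)}, the shifts $c_{n+j}=a_{n+j}+b_{n+j}$ of $c_n=a_n+b_n$ all lie in $V_a+V_b$, of dimension at most $r+s$, so the $r+s+1$ sequences $c_n,\dots,c_{n+r+s}$ are $\mathbb{C}(n)$-linearly dependent; clearing denominators in such a dependence gives a polynomial-coefficient recurrence of order at most $r+s$. For \emph{(ii)}, the shifts $c_{n+j}=a_{n+j}b_{n+j}$ lie in the span of the $rs$ products $\{a_{n+i}b_{n+j}:0\le i<r,\,0\le j<s\}$, so $c_n,\dots,c_{n+rs}$ are dependent and the order is at most $rs$. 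For \emph{(iv)}, each shift $c_{n+j}=a_{mn+mj}$ of $c_n=a_{mn}$ lies in the $\mathbb{C}(n)$-span of $\{a_{mn},\dots,a_{mn+r-1}\}$ (the coefficients being rational in $mn$, hence in $n$), so $c_n,\dots,c_{n+r}$ are dependent and the order is at most $r$.

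For \emph{(iii)}, put $c_n=\sum_{i=0}^n a_i$, so that $(N-1)c_n=c_{n+1}-c_n=a_{n+1}$. Viewed as a sequence in $n$, $a_{n+1}$ is annihilated by $L_a(n+1,N)$, the operator obtained from $L_a(n,N)$ by the substitution $n\mapsto n+1$; hence the composite $L_a(n+1,N)(N-1)$ annihilates $c_n$. This operator has order $r+1$ and leading polynomial $p_r(n+1)\ne0$, so $c_n$ is holonomic of order at most $r+1$. For the Cauchy product I work directly with the generating functions: $C(x)=A(x)B(x)$, and if $A$ satisfies a homogeneous linear ODE of order $r_1$ then all its derivatives lie in the $\mathbb{C}(x)$-span of $\{A,A',\dots,A^{(r_1-1)}\}$ (dimension $\le r_1$), and similarly for $B$. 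By the Leibniz rule, $C^{(k)}=\sum_{j}\binom{k}{j}A^{(j)}B^{(k-j)}$ lies in the span of the $r_1r_2$ functions $\{A^{(i)}B^{(\ell)}:0\le i<r_1,\,0\le\ell<r_2\}$, so the $r_1r_2+1$ functions $C,C',\dots,C^{(r_1r_2)}$ are $\mathbb{C}(x)$-linearly dependent, and clearing denominators produces a homogeneous linear ODE for $C$ of order at most $r_1r_2$.

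The points needing care rather than cleverness are the standard ones: clearing denominators may yield a leading coefficient with finitely many nonnegative integer roots, so the recurrence (or ODE) obtained is guaranteed only beyond the largest such root — exactly the caveat recorded after the holonomic Ansatz — and one should observe that each linear dependence found is genuinely nontrivial, being a dependence among more vectors than the dimension of the ambient space. The one substantive step, and the main thing to verify carefully, is the dimension bound $r_1r_2$ in the Cauchy-product case, i.e. checking via Leibniz that the span of the products $A^{(i)}B^{(\ell)}$ is closed under $d/dx$ and has $\mathbb{C}(x)$-dimension at most $r_1r_2$; the remaining work is bookkeeping parallel to the $C$-finite proofs.
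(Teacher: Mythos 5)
Your proposal is correct and follows essentially the same route as the paper: the dimension count over $\mathbb{C}(n)$ for \emph{(i)}, \emph{(ii)}, \emph{(iv)} is exactly the paper's solution-space argument (there phrased as finding a left null vector of a rational-entry matrix with one more row than columns), and your Leibniz-rule treatment of the Cauchy product via the products $A^{(i)}(x)B^{(\ell)}(x)$ is the paper's generating-function step verbatim. The only divergence is the partial sum: the paper puts the differences $c_{n+j+1}-c_{n+j}$ into an $(r+1)\times r$ rational matrix and extracts a null vector, whereas you compose operators and annihilate $c_n$ by $L_a(n+1,N)(N-1)$ --- an equivalent but slightly more direct argument that also makes the nonvanishing of the leading coefficient $p_r(n+1)$ immediate.
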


Let us reiterate here that if the resulting sequence under these operations has a zero leading coefficient $p_r(n)$ for some $n$, then the recurrence relation is only valid for $n>n_0$ where $n_0$ is the largest positive integer root of the equation $p_r(n)=0$.

\begin{proof}
To verify the closure properties \emph{(i)}-\emph{(iv)}, we follow the solution space approach 
in the same vein as that for the $C$-finite. As for the Cauchy product, it is worth pointing out that while the proof also relies on the solution space approach, this time we work with the generating function instead of the sequence itself.

We first give the proof to \emph{(i)}-\emph{(iv)}. Assume  $a_n$ is a holonomic sequence of order $r$, i.e.
\[ p_r(n)a_{n+r} + p_{r-1}(n)a_{n+r-1}+ \dots +p_{0}(n)a_{n} = 0, \]
and $b_n$ is a holonomic sequence of order $s$, i.e.
\[ q_s(n)b_{n+s} + q_{s-1}(n)b_{n+s-1}+ \dots +q_{0}(n)b_{n} = 0.\]
We note that for each fixed $k, \, k \geq r,$
$a_{n+k}$  can be written as a linear combination, 
with rational function coefficients, of  $a_{n+r-1}, a_{n+r-2}, \dots, a_{n}.$
This can be seen by repeated substitution starting from the term $a_{n+r}$, $a_{n+r+1}, \dots,$ all the way to $a_{n+k}$.
The same argument can be made for $b_{n+k}, k \geq s.$
\begin{itemize}[leftmargin=0.2in]
\item addition: let $c_n=a_n+b_n$. Then $c_n, c_{n+1},\dots, c_{n+r+s}$
can be put in the system of linear equations as
\[  \begin{bmatrix}
c_n\\
c_{n+1}\\
c_{n+2}\\
\dots \\
c_{n+r+s-1} \\
c_{n+r+s}
\end{bmatrix} =   
 \begin{bmatrix}
1 & 0 & 0 & \dots & 1 & 0 & 0 & \dots \\
0 & 1 & 0 & \dots & 0 & 1 & 0 & \dots \\
&&&\dots \\
M_{r+s-1}^{(1)}(n) & M_{r+s-1}^{(2)}(n) & M_{r+s-1}^{(3)}(n)&   \dots &  &  &  &  \\
M_{r+s}^{(1)}(n) & M_{r+s}^{(2)}(n) & M_{r+s}^{(3)}(n)&   \dots &  &  &  &  
\end{bmatrix}  \cdot
 \begin{bmatrix}
a_n\\
a_{n+1}\\
\dots\\
a_{n+r-1}\\
b_{n}\\
b_{n+1}\\
\dots\\
b_{n+s-1}
\end{bmatrix}.\]

Since the matrix $M$ with rational entries (rational matrix) in the middle 
has $r+s+1$ rows and $r+s$ columns, its null space is non-trivial, i.e. there exists a row vector $P\neq0$ such that $P\cdot M =[0,0,\dots,0].$ 
Note, in addition, that the solution $P$ in the form of rational functions has one free variable (as the number of rows $>$ number of columns). We can turn the solutions in $P$ to polynomials. This $P$ gives a holonomic relation to $c_n, c_{n+1}, \dots, c_{n+r+s}.$

\item term-wise multiplication: In a similar way as in the addition case, let $c_n=a_n\cdot b_n$. Then $c_n, c_{n+1},\dots, c_{n+rs}$
can be put in the system of linear equations as
\[  \begin{bmatrix}
c_n\\
c_{n+1}\\
c_{n+2}\\
\dots \\
c_{n+rs-1} \\
c_{n+rs}
\end{bmatrix} =   
 \begin{bmatrix}
1 & 0 & 0 & \dots & 0 & 0 & 0 & \dots \\
0 & 0 & 0 & \dots & 0 & 1 & 0 & \dots \\
&&&\dots \\
M_{rs-1}^{(1)}(n) & M_{rs-1}^{(2)}(n) & M_{rs-1}^{(3)}(n) &   \dots &  &  &  &  \\
M_{rs}^{(1)}(n) & M_{rs}^{(2)}(n) & M_{rs}^{(3)}(n) &   \dots &  &  &  &  
\end{bmatrix}  \cdot
 \begin{bmatrix}
a_nb_n\\
a_{n}b_{n+1}\\
\dots\\
a_{n+1}b_n\\
a_{n+1}b_{n+1}\\
\dots\\
a_{n+r-1}b_{n+s-1}
\end{bmatrix}.\]
Again since the matrix $M$ has $rs+1$ rows
and $rs$ columns, the null space of $M$ is non-trivial. A polynomial solution $P\neq0$ gives a holonomic relation to $c_n, c_{n+1},\dots, c_{n+rs}.$

\item linear subsequence: For a fixed integer $m$, let $c_n=a_{mn}$. Then, $c_{n}, c_{n+1},\dots, c_{n+r}$
can be put in the system of linear equations as
\[  \begin{bmatrix}
c_n\\
c_{n+1}\\
c_{n+2}\\
\dots \\
c_{n+r} 
\end{bmatrix} =   
 \begin{bmatrix}
1 & 0 & 0 & \dots & 0 \\
&&&\dots \\
M_{r-1}^{(1)}(mn) & M_{r-1}^{(2)}(mn) & M_{r-1}^{(3)}(mn) &   \dots & M_{r-1}^{(r)}(mn) \\
M_{r}^{(1)}(mn) & M_{r}^{(2)}(mn) & M_{r}^{(3)}(mn) &   \dots & M_{r}^{(r)}(mn)  
\end{bmatrix}  \cdot
 \begin{bmatrix}
a_{mn}\\
a_{mn+1}\\
a_{mn+2}\\
\dots\\
a_{mn+r-1}
\end{bmatrix}.\]
Here, the matrix $M$ has $r+1$ rows
and $r$ columns, so a non-trivial polynomial solution $P$
(in a null space of $M$) exists. This $P$ gives a holonomic relation to $c_n, c_{n+1},\dots, c_{n+r}.$

\item partial sum: We set up a slightly different matrix equation this time to simplify the computation. Let $c_n=\sum_{i=0}^na_i$. Then, $c_{n-1}, c_n, c_{n+1},\dots, c_{n+r}$ satisfy the following system of linear equations
\[  \begin{bmatrix}
c_n-c_{n-1}\\
c_{n+1}-c_{n}\\
c_{n+2}-c_{n+1}\\
\dots \\
c_{n+r}-c_{n+r-1}
\end{bmatrix} =   
 \begin{bmatrix}
1 & 0 & 0 & \dots & 0  \\
0 & 1 & 0 & \dots & 0  \\
&&&\ddots \\
0 & 0 & 0 & \dots & 1  \\   
-\dfrac{p_0(n)}{p_r(n)} & -\dfrac{p_1(n)}{p_r(n)} 
& -\dfrac{p_2(n)}{p_r(n)} & \dots & -\dfrac{p_{r-1}(n)}{p_r(n)}  
\end{bmatrix}  \cdot
 \begin{bmatrix}
a_{n}\\
a_{n+1}\\
\dots\\
a_{n+r-2}\\
a_{n+r-1}
\end{bmatrix}.\]
The matrix $M$ has $r+1$ rows and $r$ columns, so a non-trivial polynomial solution $P$ exists in the null space of $M$. This $P$ gives a holonomic relation of order $r+1$
to $c_{n-1}, c_n, c_{n+1},\dots, c_{n+r}.$\\

We now turn to the proof for the Cauchy product.
\item Cauchy product: 
Let $c_n=\sum_{i=0}^na_i\cdot b_{n-i}$. 
This time, we consider finding the homogeneous 
differential equation of $C(x)=\sum_{n=0}^{\infty}c_nx^n$.
Then we can invoke Theorem \ref{thm:generating_holonomic}, the relationship between $C(x)$ and the $c_n$ itself, to conclude that $c_n$ is a holonomic sequence. \\

To this end, let us express the homogeneous differential equation of 
$A(x)$ as
\[ p_0(x)A(x)+ p_1(x)A'(x)+ \dots + p_{r_1}(x)A^{(r_1)}(x) = 0,\]
and the homogeneous differential equation of 
$B(x)$  as
\[q_0(x)B(x)+ q_1(x)B'(x)+ \dots + q_{r_2}(x)B^{(r_2)}(x) = 0.\]

Note that $C(x)=A(x)\cdot B(x)$
and that any order derivatives of $C(x)$ can be
written as a linear combination of $A^{(i)}(x)\cdot B^{(j)}(x),$
\;\  $0 \leq i \leq r_1-1$ and $0 \leq j \leq r_2-1$. 
Hence, we can write the relation in matrix notation as: 
\footnotesize 
\[  \begin{bmatrix}
C(x)\\
C'(x)\\
C''(x)\\
\dots \\
C^{(r_1r_2-1)}(x) \\
C^{(r_1r_2)}(x) 
\end{bmatrix} =   
 \begin{bmatrix}
1 & 0 & 0 & \dots & 0 & 0 & 0 & \dots \\
0 & 1 & 0 & \dots & 1 & 0 & 0 & \dots \\
&&&\dots \\
M_{r_1r_2-1}^{(1)}(n) & M_{r_1r_2-1}^{(2)}(n) & M_{r_1r_2-1}^{(3)}(n) &   \dots &  &  &  &  \\
M_{r_1r_2}^{(1)}(n) & M_{r_1r_2}^{(2)}(n) & M_{r_1r_2}^{(3)}(n) &   \dots &  &  &  &  
\end{bmatrix}  \cdot
 \begin{bmatrix}
A(x)B(x)\\
A(x)B'(x)\\
\dots\\
A'(x)B(x)\\
A'(x)B'(x)\\
\dots\\
A^{(r_1-1)}(x)B^{(r_2-1)}(x)
\end{bmatrix}.\]
\normalsize

Using the same arguments as above, since the matrix $M$ has $r_1r_2+1$ rows and $r_1r_2$ columns, the existence of a non-trivial polynomial solution $P$ (in the null space of $M$) is ensured. This $P$ gives a homogeneous relation in terms 
of the differential equation of order $r_1r_2$ to $C(x), C'(x), \dots, C^{(r_1r_2)}(x).$  \qedhere
\end{itemize}
\end{proof}
\begin{cor}
Let $\displaystyle A(x) = \sum_{n=0}^{\infty} a_nx^n$ be holonomic. Then
$A'(x)$ and $\int A(x)$ are also holonomic.
\end{cor}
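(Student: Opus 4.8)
The plan is to exploit the equivalence, supplied by Corollary~\ref{cor:homogeneous_holonomic} together with Theorem~\ref{thm:generating_holonomic}, between ``$f(x)$ is holonomic'' and ``$f(x)$ satisfies a homogeneous linear differential equation with polynomial coefficients.'' So I would start by fixing such an equation for $A(x)$,
\[ q_0(x)A(x) + q_1(x)A'(x) + \dots + q_r(x)A^{(r)}(x) = 0, \qquad q_r(x)\not\equiv 0, \]
and then it suffices to produce an equation of the same type for $A'(x)$ and for $\int A(x)$ and invoke Theorem~\ref{thm:generating_holonomic} in the reverse direction to conclude that each is the generating function of a holonomic sequence.

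For $\int A(x)$ this is almost immediate. Set $h(x) = \int_0^x A(t)\,dt$; any other antiderivative differs from this one by a constant, which is holonomic (it satisfies $f'=0$), so the choice is immaterial. Since $h^{(j)}(x) = A^{(j-1)}(x)$ for every $j \ge 1$, substituting into the equation for $A$ yields
\[ q_0(x)h'(x) + q_1(x)h''(x) + \dots + q_r(x)h^{(r+1)}(x) = 0, \]
a homogeneous linear differential equation with polynomial coefficients (order $r+1$, leading coefficient $q_r \not\equiv 0$), whence $h$ is holonomic by Theorem~\ref{thm:generating_holonomic}.

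For $A'(x)$ I would argue by a dimension count over the field $\mathbb{C}(x)$ of rational functions. Because $q_r \not\equiv 0$, the equation expresses $A^{(r)}$ as a $\mathbb{C}(x)$-linear combination of $A, A', \dots, A^{(r-1)}$; differentiating this relation and resubstituting shows inductively that $A^{(k)}$ lies in the $\mathbb{C}(x)$-vector space $V := \mathrm{span}_{\mathbb{C}(x)}\{A, A', \dots, A^{(r-1)}\}$ for every $k \ge 0$, and $\dim_{\mathbb{C}(x)} V \le r$. Hence the $r+1$ functions $A', A'', \dots, A^{(r+1)}$ — that is, the successive derivatives $g, g', \dots, g^{(r)}$ of $g := A'$ — are linearly dependent over $\mathbb{C}(x)$; clearing denominators turns this dependence into a homogeneous linear differential equation with polynomial coefficients for $g$, and Theorem~\ref{thm:generating_holonomic} gives that $A' = g$ is holonomic.

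The only genuine subtlety is the usual bookkeeping about nontriviality: one must know the dependence relation among $A', \dots, A^{(r+1)}$ is not the empty $0=0$ (guaranteed by $r+1 > r \ge \dim V$) and that after clearing denominators some coefficient remains nonzero (if the top coefficient happens to vanish, take instead the lowest-order relation among the $g^{(j)}$ that actually occurs). Everything else is routine. A more combinatorial alternative, closer to the sequence-level arguments used earlier, avoids differential equations altogether: $A'(x) = \sum_{n\ge 0}(n+1)a_{n+1}\,x^n$ and $\int_0^x A = \sum_{m\ge 1}\frac{a_{m-1}}{m}\,x^m$, so one only needs that shifting the index of a holonomic sequence, multiplying it by a polynomial in $n$, and dividing it by $n+1$ each preserve holonomicity — all immediate from the definition of a holonomic recurrence. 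I would present the differential-equation version as the main proof, since given Theorem~\ref{thm:generating_holonomic} it reduces to a one-line substitution for $\int A$ and a short linear-algebra remark for $A'$.
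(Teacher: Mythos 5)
Your main argument is correct, but it takes a different route from the paper. The paper's entire proof is the one-line sequence-level argument you relegate to a closing remark: write $A'(x)=\sum_{n\ge 0}(n+1)a_{n+1}x^n$, note that $n+1$ (a polynomial sequence) and the shifted sequence $a_{n+1}$ are holonomic, and invoke the closure under term-wise multiplication from Theorem~\ref{hoclose}; the integral is handled by the same remark (its coefficients are $a_{n-1}/n$, and $1/n$ is itself holonomic). Your primary proof instead works on the analytic side: convert ``$a_n$ holonomic'' into a homogeneous polynomial-coefficient ODE via Corollary~\ref{cor:homogeneous_holonomic}, handle $\int A$ by the substitution $h^{(j)}=A^{(j-1)}$, handle $A'$ by the standard finite-dimensionality argument that all derivatives of $A$ lie in the $\mathbb{C}(x)$-span of $A,A',\dots,A^{(r-1)}$, and return to the sequence level with Theorem~\ref{thm:generating_holonomic}. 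Both are legitimate given the machinery already in the paper. The paper's route is shorter because the heavy lifting (term-wise multiplication closure) is already done; yours buys explicit ODE-order control ($r+1$ for $\int A$, at most $r$ for $A'$) and introduces the useful fact, not otherwise stated in the paper, that the derivatives of a $D$-finite function span a finite-dimensional $\mathbb{C}(x)$-vector space. Two small caveats in your version, both benign: Corollary~\ref{cor:homogeneous_holonomic} does not literally promise a nonzero leading coefficient, so you should say you take $r$ to be the largest index with $q_r\not\equiv 0$ in a nontrivial equation; and your fallback for the dependence relation among $g,g',\dots,g^{(r)}$ is fine since Theorem~\ref{thm:generating_holonomic} does not require the top coefficient to survive.
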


\begin{proof}
$\displaystyle A'(x) = \sum_{n=0}^{\infty}(n+1)a_{n+1}x^n.$ 
Since $n+1$ and $a_{n+1}$ are holonomic sequences, by the closure property of multiplication, so is the sequence $(n+1)a_{n+1}$. 
Hence, $A'(x)$ is  holonomic. A similar argument holds for $\int A(x).$
\end{proof}

\textbf{Example: Closure properties}
  
Let $ \displaystyle a_n =\dfrac{1}{n+1}\binom{2n}{n}$, the Catalan numbers. \;\
Recall that $a_n$ satisfies the holonomic recurrence of order 1 and degree 1: 
\[ (4n+2)a_n-(n+2)a_{n+1}=0.\]

Let $ \displaystyle b_n =\sum_{i=1}^n \dfrac{1}{i}$, the harmonic numbers. \;\
Here, $b_n$ satisfies the holonomic recurrence of order 2 and degree 1: 
\[ (n+1)b_n-(2n+3)b_{n+1}+(n+2)b_{n+2} =0.\]

We first show detailed calculations for the closure properties of addition and term-wise multiplication.

\begin{itemize}[leftmargin=0.2in]
\item addition: $c_n=a_n+b_n$. Consider a matrix equation:
\[  \begin{bmatrix}
c_n\\
c_{n+1}\\
c_{n+2}\\
c_{n+3}
\end{bmatrix} =   
 \begin{bmatrix}
1 & 1 & 0  \\
\frac{2(2n+1)}{n+2} & 0 & 1  \\
\frac{4(3+2n)(1+2n)}{(3+n)(2+n)} & -\frac{n+1}{n+2} & \frac{2n+3}{n+2}  \\
\frac{8(5+2n)(3+2n)(1+2n)}{(4+n)(3+n)(2+n)} & -\frac{(5+2n)(n+1)}{(3+n)(2+n)} 
& \frac{(12n+3n^2+11)}{(3+n)(2+n)}  
\end{bmatrix}  \cdot
 \begin{bmatrix}
a_n\\
b_{n}\\
b_{n+1}
\end{bmatrix}.\]

The polynomial solution $P$ in the null space of the rational matrix $M$ is
\[ P = [ -2(n+1)(3n+7)(1+2n)(2+n)^2,   (5+3n)(2+n)(9n^3+43n^2+58n+20), \dots ].   \]
This gives rise to the holonomic relation of $c_n$ of order 3 as
\begin{align*}
&-2(n+1)(3n+7)(1+2n)(2+n)^2c_n+(5+3n)(2+n)(9n^3+43n^2+58n+20)c_{n+1} \\
&-(216n+241n^2+111n^3+64+18n^4)(3+n)c_{n+2}\\
&+(3+n)(4+n)(3n+4)(n+1)^2c_{n+3} =0. 
\end{align*}

\begin{code}
> R1:= 2+4*n+(-2-n)*N:
> R2:= n+1+(-3-2*n)*N+(2+n)*N^2:
> HoAdd(R1,R2,n,N,c);
\end{code}




\item  term-wise multiplication: $d_n=a_n\cdot b_n$. Consider
\[  \begin{bmatrix}
d_n\\
d_{n+1}\\
d_{n+2}
\end{bmatrix} = \begin{bmatrix}
1 &  0  \\
 0 & \frac{2(2n+1)}{n+2}   \\
\frac{4(3+2n)(1+2n)(-1-n)}{(3+n)(2+n)^2} & \frac{4(3+2n)^2(1+2n)}{(3+n)(2+n)^2} 
\end{bmatrix}  \cdot
 \begin{bmatrix}
a_nb_n\\
a_nb_{n+1}
\end{bmatrix}.
\]

The polynomial solution $P$ in the null space of the rational matrix $M$ is
\[ P = [ 4(3+2n)(1+2n)(n+1),-2(3+2n)^2(2+n),(2+n)^2(3+n)].   \]
This gives rise to the holonomic relation of $d_n$ of order 2 as
\begin{align*}
&4(3+2n)(1+2n)(n+1)d_n-2(3+2n)^2(2+n)d_{n+1}
+(2+n)^2(3+n)d_{n+2} =0. 
\end{align*}

\begin{code}
> R1:= 2+4*n+(-2-n)*N:
> R2:= n+1+(-3-2*n)*N+(2+n)*N^2:
> HoTermWise(R1,R2,n,N,c);
\end{code}



We now give an example that shows how to obtain a homogeneous differential equation for the  Cauchy product.

\item Cauchy product: $e_n= \sum_{i=0}^n a_i\cdot b_{n-i}$. 

In this example, we let
$\displaystyle a_n = \dfrac{1}{n+1}\binom{2n}{n}$
and $b_n = n!.$

As $a_n$ and $b_n$ are holonomic, the 
homogeneous differential equations $A(x)$ and $B(x)$ exist. Indeed, they satisfy
\begin{align*}
x(4x-1)A''(x)+2(5x-1)A'(x)+2A(x) &= 0, \\
x^2B''(x)+(3x-1)B'(x)+B(x) &= 0.
\end{align*}

Letting $E(x)=A(x)\cdot B(x)$, we obtain the matrix equation
\[  \begin{bmatrix}
E(x)\\
E'(x)\\
E^{''}(x)\\
E^{(3)}(x)\\
E^{(4)}(x)
\end{bmatrix} = \begin{bmatrix}
1 &  0 &0 &0 \\
0 & 1 & 1 & 0 \\
-\dfrac{1}{(x^2)}-\dfrac{2}{x(4x-1)}& -\dfrac{(3x-1)}{x^2}
& -\dfrac{2(5x-1)}{x(4x-1)}& 2 \\
& \dots & & \\
& \dots & &
\end{bmatrix}  \cdot
 \begin{bmatrix}
A(x)B(x) \\
A(x)B'(x) \\
A'(x)B(x) \\
A'(x)B'(x) \\
\end{bmatrix}.
\]

The polynomial solution $P$ in the null space of the rational matrix $M$ is found to be
\[ P = [ 2(72x^6+660x^5-1392x^4+900x^3-266x^2+37x-2)(4x-1), \dots].   \]
This gives rise to the homogeneous differential equation 
of $C(x)$ of order $2\cdot2 = 4$ as
\begin{align*}
&2(72x^6+660x^5-1392x^4+900x^3-266x^2+37x-2)(4x-1)E(x) \\
&+2(1512x^8+11076x^7-26812x^6+22170x^5-9442x^4+2333x^3-342x^2+28x-1)E'(x) \\
&+\dots+x^5(4x-1)^2(4x^4+24x^3-31x^2+10x-1)E^{(4)}(x)=0. 
\end{align*}

\begin{code}
> DA:= lhs(HoToDiffHom(4*n+2-(n+2)*N,[1],n,N,x,D))
/f(x);
> DB:= lhs(HoToDiffHom(n+1-N,[1],n,N,x,D))/f(x);
> HoCauchy(DA,DB,x,D,c); 
\end{code}

\end{itemize}

\textbf{A remark on the rigorous proof of identities and the upper bound for the degree of recurrence relations} 

As already mentioned previously in the $C$-finite section, the closure property is utterly important. Theorem \ref{hoclose} gives bounds for the order of recurrence relation. Meanwhile, bounds for the degree of polynomial coefficients can also be determined. Once the bounds for the order and degree are known, if the holonomic ansatz is fit with enough data, a rigorous recurrence relation can be ensured. 

We now illustrate how we can obtain a good pragmatic upper bound for the degree using the following example. 

{\bf Example:} Let us try to figure out an upper bound of the degree of the recurrence relation under the addition: $c_n=a_n+b_n$. Using the same example as above, recall that the bound for the order of $c_n$ is $r=2+1=3$, and the following rational matrix was obtained in the intermediate steps:

\[  M =   
 \begin{bmatrix}
1 & 1 & 0  \\
\frac{2(2n+1)}{n+2} & 0 & 1  \\
\frac{4(3+2n)(1+2n)}{(3+n)(2+n)} & -\frac{n+1}{n+2} & \frac{2n+3}{n+2}  \\
\frac{8(5+2n)(3+2n)(1+2n)}{(4+n)(3+n)(2+n)} & -\frac{(5+2n)(n+1)}{(3+n)(2+n)} 
& \frac{(12n+3n^2+11)}{(3+n)(2+n)}  
\end{bmatrix}. \]

Getting rid of the denominator, we obtain the matrix 
\[  \tilde{M} =   
 \begin{bmatrix}
(4+n)(3+n)(2+n) & (3+n)(2+n) & 0  \\
2(2n+1)(4+n)(3+n) & 0 & (3+n)(2+n)  \\
4(3+2n)(1+2n)(4+n) &  -(n+1)(3+n)
& (2n+3)(3+n)\\ 
 8(5+2n)(3+2n)(1+2n)&  -(5+2n)(n+1)
& 12n+3n^2+11\\
\end{bmatrix} \] 
whose polynomial entries have the highest degree of $v=3$.

Finding a bound for the degree of $c_n$ amounts to determining the degree of the polynomial solution $P=\left[p_1(n), p_2(n), \cdots, p_{r+1}(n)\right]$ in the null space of $\tilde{M}$.

Formally, let us assume that the highest degree of polynomial in $P$ is $k$. Then, $\displaystyle p_l(n)= \sum_{j=0}^k b_{j,l}n^j$, $l=1,2,\dots,r+1$. By the method of equating the coefficients, the matrix equation $P\cdot \tilde{M}=0$ results in the system of $r(k+1+v)$  linear equations with $(r+1)(k+1)$ unknowns. The existence of a solution to this linear system is guaranteed whenever $k$ satisfies the inequality $(r+1)(k+1)>r(k+1+v)$, i.e. $k\geq rv$.
Therefore, this condition gives rise to a pragmatic upper bound for $k$, the degree of the holonomic sequence $c_n$.
\item  {\bf   Asymptotic approximation solutions:}

Unlike the $C$-finite recurrences, no closed form solution is available for holonomic recurrences. Hence, an approximation solution in terms of asymptotic expansion will be sought for a holonomic recurrence. As the method is quite complicated, we will not attempt to provide a theoretical analysis, but rather some applications of the method.
For a more thorough account on the subject, the interested reader is referred to \cite{JZ}.
In what follows, the sequence $a_n$ will be treated as a function. To emphasize this fact we will denote it by $y(n)$.\\

Suppose that $y(n)$ is a solution to 
\begin{equation}
\label{eq:yn}
    \sum_{i=0}^r p_i(n) y(n+i) = 0, 
\end{equation}
where  $p_r(n) \neq 0, \;\  n=0,1,2,\dots.$ 

The approach is based on the Birkhoff-Trjitzinsky method \cite{B, BT}. Although the detailed analysis of the method was given in \cite{B, BT}, we adopt here a variant which assumes a solution in its simplest form of \eqref{BT}. Then, substitute the assumed solution into 
\eqref{eq:yn} (with the help of \eqref{rat1}) to find values for the parameters.
For this reason,  we will refer to the method as the \emph{guess and check} in this paper. 
Despite its simplicity, this variant has proven to be applicable to a large class of holonomic recurrences.

\vspace{1em}

\textbf{Guess and check method:}

The guess and check method is a general method of solving holonomic recurrences. What we will do is to try a simple solution of the form
\begin{equation}  \label{BT} 
y(n) =  e^{\mu_0n\ln{n}+\mu_1n} \cdot n^{\theta} \cdot
e^{\alpha_1n^{\beta}+\alpha_2n^{\beta-\frac{1}{\rho}}
+\alpha_3n^{\beta-\frac{2}{\rho}}+\dots},
\end{equation}
where the parameters $\mu_0, \mu_1, \theta$ are complex, $\rho \, (\rho \geq 1), 
\mu_0 \rho$ are integers, 
and $\alpha_1 \neq 0,  \;\ \beta = \frac{j}{\rho}, \, 0 \leq j < \rho.$

This method provides $r$ independent solutions (all formal series solutions) but only up to constant multiple. The function in the form of \eqref{BT} which satisfies \eqref{eq:yn} is called \emph{a formal series solution} (FSS).

Using some power series expansion and simplification, 
we obtain
\begin{equation}  \label{rat1} 
\dfrac{y(n+k)}{y(n)} = n^{\mu_0k}\lambda^k\cdot\{ 1
+\dfrac{1}{n}\left(\theta k +\dfrac{k^2\mu_0}{2}\right)
+\dots \}\cdot 
e^{\alpha_1\beta k n^{\beta-1}
+\alpha_2(\beta-\frac{1}{\rho})kn^{\beta-\frac{1}{\rho}-1}+\dots},
\end{equation}
for $k\geq0$, where $\lambda = e^{\mu_0+\mu_1}$.

\textbf{Applications:}

Let us give walkthrough examples to demonstrate the approach. Since the procedure  involves some steps that require human input and expertise, no Maple program is provided in this section. \vspace{1em}

\textbf{Example 1:}  $y(n) = n!$. The most standard and widely used asymptotic formula for the factorial function is Stirling's formula. In this example, we will try to obtain an asymptotic approximation for the factorial function using  the method of guess and check.

From the recurrence relation  $y(n+1)-(n+1)y(n) = 0$, we apply \eqref{rat1}: 
\[ n^{\mu_0}\lambda\{1+\dfrac{1}{n}\left(\theta +\dfrac{\mu_0}{2}\right)+\dots\}
e^{\alpha_1\beta n^{\beta-1}+\dots}-(n+1) = 0. \]

Expanding the exponential term with power series and comparing the terms involving $n^{\mu_0}$ and $n$, we have 
$\mu_0=1, \, \lambda = 1$. Also, since $\mu_0 \rho$ must be an integer, 
we assign $\rho = 1$, the minimum possible value. 

Next, the value of $\beta$ must be 0, as the coefficient of $n^s$ for $0 < s < 1$ must be 0.

For the coefficient of 1 (the constant term), we have
$\theta+\dfrac{\mu_0}{2}-1=0$. Hence, $\theta = \dfrac{1}{2}.$

Plugging in these parameters back to \eqref{BT}, we arrive at
\[ y(n) = K\left(\dfrac{n}{e}\right)^n\sqrt{n}\left(1+\dfrac{c_1}{n}+\dfrac{c_2}{n^2}+\dots  \right).\]

We note that the infinite series on the right most arises from the series expansion of
$e^{\alpha_2n^{-1}+\alpha_3n^{-2}+\dots}.$
The values of $c_1, c_2, \dots$ can be figured out by plugging $y(n)$ back into the original recurrence and comparing the coefficient of $n^i$
for each $i$ (the method of undetermined coefficients).
The constant $K$, however, cannot be obtained by this method although 
another asymptotic method shows that $K= \sqrt{2\pi}.$

\textbf{Example 2:} 
Let $ \displaystyle y(n) = \left \lfloor{\left(\dfrac{n}{2}\right)^2}\right \rfloor$
with the holonomic recurrence
\[ (n+2)y(n)+2y(n+1)-ny(n+2) =0.\]

Apply \eqref{rat1}  to get the relation: 
\small
\[ (n+2)+2n^{\mu_0}\lambda\{1+\dfrac{1}{n}\left(\theta +\dfrac{\mu_0}{2}\right)+\dots\}
e^{\alpha_1\beta n^{\beta-1}+\dots}
-nn^{2\mu_0}\lambda^2\{1+\dfrac{1}{n}\left(2\theta +2\mu_0\right)+\dots\}
e^{2\alpha_1\beta n^{\beta-1}+\dots} = 0. \]
\normalsize

Expanding the exponential term with power series and comparing the terms involving $n^{2\mu_0}$ and $n$, we have $\mu_0=0$. Then, $1-\lambda^2=0$, which gives $\lambda = \pm 1$. Also, since $\mu_0 \rho$ must be an integer, 
we again assign the minimum possible value $\rho = 1$. 

Next, $\beta$ must be 0, as the coefficient of $n^s$ for $0 < s < 1$ must be 0.

For the coefficient of 1 (the constant term), we have
$2+2\lambda-\lambda^2(2\theta+2\mu_0)=0$. 
Hence, $\theta = \dfrac{1+\lambda}{\lambda^2} = 2$ or $0$.

Plugging these parameters back into \eqref{BT}, we obtain
\[ y_1(n) = K_1 n^{2}\left(1+\dfrac{c_1}{n}+\dfrac{c_2}{n^2}+\dots  \right),\]
\[ y_2(n) = K_2 (-1)^n\left(1+\dfrac{d_1}{n}+\dfrac{d_2}{n^2}+\dots  \right),\]
where $y(n) = y_1(n)+y_2(n)$.

With this form of solution, 
we apply the method of undetermined coefficients to the original recurrence, which in turn implies that $c_i$ and $d_i$ 
are all zero except $c_2=-1/2$. Hence, $y(n) = K_1n^2-\dfrac{K_1}{2}+K_2(-1)^n$, 
agreeing with our earlier result that 
$y(n) = \dfrac{n^2}{4}-\dfrac{1}{8}+\dfrac{(-1)^n}{8}.$
\end{enumerate}

\section{$C^2$-finite}
\label{sec:C2}

Building upon $C$-finite, the class of $C^2$-finite sequences that we will investigate in this section is a relatively new domain of research. The idea was first mentioned in \cite{KM} in the context of graph polynomials, and  it has recently been discussed 
in \cite{JNP,TZ} from a theoretical perspective.

\begin{enumerate}[leftmargin=0.4in,label=\textbf{\arabic*.}]
\item  {\bf   Ansatz:} $a_n$ is defined by a linear recurrence with $C$-finite sequence coefficients:
\[ C_{r,n}a_{n+r} + C_{r-1,n}a_{n+r-1}+ \dots +C_{0,n}a_{n} = 0, \]
where  $C_{r,n} \neq 0, \, n=0,1,2, \dots$,
along with the initial values $a_0, a_1, \dots, a_{r-1}.$
We call $a_n$ a \emph{$C^2$-finite sequence} of order $r$.
This term was first coined in \cite{KM}.

As was the case for holonomic sequences, the condition $C_{r,n} \neq 0$ is necessary for recursively computing the value of $a_{n+r}$ from preceding terms.

\item  {\bf   Example 1:} $C^2$-finite sequence of order $1$ given by
\[ a_n = F_{n+1} \cdot a_{n-1}, \qquad  a_0=1,\]
where $F_n$ is the Fibonacci sequence.

An interesting fact about this sequence is that it also satisfies
a non-linear relationship
\[ a_{n}a_{n+1}a_{n+3} - a_{n}a_{n+2}^2 - a_{n+2}a_{n+1}^2 = 0.  \]
This nonlinear relation was provided by Robert Israel/Michael Somos in 2014. 
It is the sequence A003266 on the Sloane’s OEIS, \cite{OEIS}.

This example motivated us to investigate connections between the class of $C^2$-finite sequences and a non-linear recurrence representation. Especially, since the conditions which ensure the existence of nonlinear recurrences 
for the $C$-finite (sub)sequences have already been examined in \cite{EZ}, we are curious to know if similar results might be obtained for $C^2$-finite sequences.
Furthermore, it is also unclear whether or not one can find conditions for which a non-linear recurrence is a $C^2$-finite sequence. We leave these as open problems for the interested readers. 

\textbf{Open problem 1:}
Find conditions which guarantee that a $C^2$-finite sequence can be represented in a non-linear recurrence relation.

\textbf{Open problem 2:}
Find conditions which guarantee that a non-linear recurrence is a $C^2$-finite sequence.

{\bf Example 2:} $C^2$-finite sequence of order $2$ given by
\[ a_{n+2} = a_{n+1}+2^na_{n}, \] 
with initial values $a_0=1, a_1=1.$
In terms of the shift operator,
\[  [N^2-N-2^n] \cdot a_n = 0.  \]

{\bf Example 3:} $C^2$-finite sequence of order $2$ given by
\[ a_{n+2} = F_{n+1} a_{n+1}+F_{n}a_{n}, \] 
with initial values $a_0=1, a_1=1.$

\item  {\bf   Guessing:} 

Input: the order $r$ of $a_n$, the order $d$ for each $C$-finite coefficient $C_{i,n},\, 0 \leq i \leq r$, and a sufficiently long sequence of data $a_n$.

This time guessing becomes very difficult due to the challenge we face when solving a system of nonlinear equations. We illustrate this by the following examples.

The simplest, non-trivial example is the second order relation
where $C_{1,n}$ and $C_{0,n}$ are of first order.
The form of ansatz is 
\[  a_{n+2} = c_1\alpha^na_{n+1}+c_2\beta^na_{n},  \;\ \;\ n \geq 0.   \] 
Here, we solve for constants $\{\alpha, \beta, c_1, c_2\}$ through
the system of nonlinear equations. 
With four parameters, Maple can still handle the computation in this case.

However, if the second order relation is assumed with $C_{1,n}$ and $C_{0,n}$ of second order, i.e.
\[  a_{n+2} = (c_1\alpha_1^n+c_2\alpha_2^n)a_{n+1}
+(c_3\alpha_3^n+c_4\alpha_4^n)a_{n},  \;\ \;\ n \geq 0,  \] 
with eight parameters to solve for, this time the problem becomes computationally infeasible. 

Another approach that could be useful for guessing a $C^2$-finite relation is to apply a numerical solution method. In Maple, we can do this with the available \texttt{fsolve} built-in command.
Unfortunately, even with the numerical method, we were not able to obtain the solution within a finite number of steps.
It was rather disappointing to find that guessing for $C^2$-finite is not practical, as it plays a big role in determining an expression for the sequences.

\item  {\bf   Generating function:} 

In this section, we establish several new properties for $C^2$-finite sequences. First, we give a formal definition of $C^2$-finite. 


We recall from the $C$-finite section that 
a closed-form formula for a $C$-finite sequence $C_n$ is 
\[ C_n = \sum_{\alpha \in S} p_{\alpha}(n)\alpha^n,   \]
where $\alpha$'s are the roots of the characteristic polynomial
of $C_n$.

We define $Deg(C_n)$ to be the highest degree of
$p_{\alpha}(n), \, \alpha \in S.$

\begin{defi}
A $C^2$-finite sequence $a_n$ is said to have order $r$ and
degree $k$ if $a_n$ satisfies the recurrence relation 
\[ C_{r,n}a_{n+r} + C_{r-1,n}a_{n+r-1}+ \dots +C_{0,n}a_{n} = 0, \]
where for each $i, \, 0 \leq i \leq r, \;\ Deg(C_{i,n})$ is at most $k.$
\end{defi}

We are now ready to derive a new differential equation for the generating function of $C^2$-finite sequence. This inquiry was made in \cite{JNP}.

\begin{thm}  \label{GenC2}
Let $\displaystyle f(x) = \sum_{n=0}^{\infty} a_nx^n $
where $a_n$ is a $C^2$-finite sequence of order $r$ and 
degree $k$.
Then, $f(x)$ satisfies a (non-homogeneous) 
linear differential equation with polynomial coefficients,
\begin{equation}    \label{diffcom}
\sum_{\alpha} \left[ q_{\alpha,0}(x)f(\alpha x)+ q_{\alpha,1}(x)f'(\alpha x)
+ \dots + q_{\alpha,r'}(x)f^{(r')}(\alpha x)\right] = R(x)  
\end{equation}
where $\alpha$ is defined to be $\alpha_{i,j}$, the root of the characteristic polynomial of $C_{i,n}.$
Here, the order $r'$ is at most $k$, degree of $q_{\alpha,t}(x)$ 
for each $\alpha$ and $t$
is at most $r+k$, and the degree of polynomial $R(x)$ is at most $r-1.$
\end{thm}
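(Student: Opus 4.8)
The plan is to mimic the proof of Theorem \ref{GenHolo} (the holonomic case), but to track the exponential factors $\alpha^n$ coming from the $C$-finite coefficients and push them into dilations $f(\alpha x)$ of the generating function. First I would write each $C$-finite coefficient in its closed form: by Proposition \ref{prop:converse_closeform_cfinite} we may write
\[
C_{i,n} = \sum_{\alpha} p_{i,\alpha}(n)\,\alpha^{n},
\]
where $\alpha$ ranges over the (finitely many) roots $\alpha_{i,j}$ of the characteristic polynomials, and each $p_{i,\alpha}$ is a polynomial in $n$ of degree at most $k = Deg(C_{i,n})$. Substituting into the defining recurrence $\sum_{i=0}^r C_{i,n} a_{n+i} = 0$ and expanding $p_{i,\alpha}(n) = \sum_{s=0}^k b_{i,s,\alpha} n^s$, the recurrence becomes
\[
\sum_{\alpha}\sum_{i=0}^r\sum_{s=0}^k b_{i,s,\alpha}\, \alpha^{n} n^{s} a_{n+i} = 0 .
\]
So the whole problem is to understand, for fixed $\alpha, i, s$, the series $\sum_{n\ge 0} \alpha^n n^s a_{n+i} x^n$ and to recognize it in terms of derivatives of $f(\alpha x)$.

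The key computational step is the analogue of the identity in the holonomic proof. Observe that $f(\alpha x) = \sum_{n\ge 0} a_n \alpha^n x^n$, so $\frac{d^j}{dx^j} f(\alpha x) = \sum_{n\ge 0} (n)_j\, a_n \alpha^n x^{n-j}$ where $(n)_j$ is the falling factorial. Hence for fixed $s$ and $i$ one can choose constants $c^{(s,i)}_j$, $0\le j\le s$, with $\sum_{j=0}^s c^{(s,i)}_j (n+i)_j = n^s$ — this is exactly the $(s+1)\times(s+1)$ triangular linear system used in Theorem \ref{GenHolo}, and it is solvable there — so that
\[
\sum_{j=0}^s c^{(s,i)}_j\, x^{j-i}\, \frac{d^j}{dx^j} f(\alpha x)\Big|_{\text{shift}}
= \sum_{n\ge -i} a_{n+i}\,\alpha^{n+i}\, n^{s} x^{n}.
\]
One has to be a little careful about where the $\alpha$-powers land: differentiating $f(\alpha x)$ brings down $(n)_j$ but the coefficient is $a_n\alpha^n$, so after reindexing $n\mapsto n+i$ the exponential attached is $\alpha^{n+i}$, and the spare $\alpha^i$ is a harmless constant that gets absorbed into $b_{i,s,\alpha}$ (or equivalently one rescales the polynomial coefficients). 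Collecting the finitely many lower-order boundary terms $\sum_{n=-i}^{-1}$ and multiplying through by $x^r$ to clear negative powers, exactly as in the holonomic proof, one gets the left side equal to a sum over $\alpha$ of expressions $\sum_t q_{\alpha,t}(x) f^{(t)}(\alpha x)$, and the right side a polynomial $R(x)$ coming only from the boundary terms $a_0,\dots,a_{r-1}$.

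Finally I would read off the bounds just as in Theorem \ref{GenHolo}: the highest derivative order $r'$ that appears is $\max_s s = k$; each $q_{\alpha,t}(x)$ is built from $b_{i,s,\alpha} c^{(s,i)}_j x^{j+r-i}$ with $j\le s\le k$ and $i\le r$, hence has degree at most $r+k$; and $R(x) = \sum_\alpha\sum_i\sum_s\sum_{n=0}^{i-1} b_{i,s,\alpha}\,\alpha^{n}(n-i)^s a_n x^{n+r-i}$ has degree at most $r-1$. The main obstacle I anticipate is purely bookkeeping: keeping the three (or four) indices $\alpha$, $i$, $s$, $j$ straight and making sure the dilation argument $\alpha x$ is carried consistently through the differentiations and reindexings, so that the $\alpha$-dependence ends up entirely inside $f(\alpha x)$ and its derivatives and not leaking into the polynomial coefficients. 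There is no new idea beyond the holonomic proof — the substance is verifying that the falling-factorial trick still linearizes $n^s a_{n+i}$ after the extra $\alpha^n$ has been pulled out as a dilation.
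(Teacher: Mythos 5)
Your proposal is correct and follows essentially the same route as the paper's proof: expand each $C$-finite coefficient in closed form $\sum_\alpha p_{i,\alpha}(n)\alpha^n$, linearize $n^s a_{n+i}\alpha^n$ via constants $c^{(s,i)}_j$ with $\sum_j c^{(s,i)}_j(n+i)_j = n^s$ applied to $x^{j-i}f^{(j)}(\alpha x)$, collect the finitely many boundary terms, multiply by $x^r$, and read off the bounds $r'\le k$, $\deg q_{\alpha,t}\le r+k$, $\deg R\le r-1$. Only trivial slips: the closed form is supplied by Proposition \ref{prop:close_cfinite} rather than its converse, and the exponential factor in your expression for $R(x)$ should be $\alpha^{n-i}$ (or be rescaled consistently), neither of which affects the degree bound.
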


{\bf Notation.}  To avoid any ambiguity in our notation $f^{(j)}(\alpha x)$, this notation means 
\[
f^{(j)}(\alpha x)=\dfrac{d^jf(\alpha x)}{dx^j}.
\]

\begin{defi}
The function $f(x)$ as a generating function
of a $C^2$-finite sequence is called \emph{$C^2$-finite}.
Also,  we will call a function $f(x)$ that satisfies \eqref{diffcom} \emph{$DC$-finite} (differentiably composite finite). 
\end{defi}

\begin{rem}
In contrast with the $DC$-finite, another approach to generalizing the holonomic sequences is through the $D$-finite generating function (of a holonomic sequence). This was considered in \cite{JP} and the resulting generating function is known as \emph{$DD$-finite}.
\end{rem}

\begin{proof}
Following the idea of the proof of Theorem \ref{GenHolo}, assume that $a_n$ satisfies the relation   
\[  
C_{r,n}(n)a_{n+r} + C_{r-1,n}(n)a_{n+r-1}+ \dots +C_{0,n}(n)a_{n} = 0,
\]
where $C_{t,n}(n)$  can be written in a closed-form as 
$\displaystyle C_{t,n}(n) = \sum_{\alpha \in S_t} p_{t,\alpha}(n)\alpha^n.$

We denote by $b_{s,t,\alpha}$ the coefficient of $n^s\alpha^na_{n+t}$ in the relation above. Then, $\displaystyle p_{t,\alpha}(n)=\sum_{s=0}^k b_{s,t,\alpha}n^s$, and so the $C^2$-finite relation becomes

\begin{equation}  \label{c2rela}
\sum_{t=0}^r\sum_{\alpha \in S_t}\sum_{s=0}^k b_{s,t,\alpha}n^s\alpha^n a_{n+t}=0.
\end{equation}

We next prove the following identity. For fixed $s$ and $t$,

\begin{align*}
\sum_{j=0}^s c^{(s,t)}_jx^{j-t}f^{(j)}(\alpha x) 
&=  \sum_{j=0}^s c^{(s,t)}_j  \sum_{n=0}^{\infty} (n)_ja_n \alpha^n  x^{n-t} \\
&=  \sum_{j=0}^s c^{(s,t)}_j  \sum_{n=-t}^{\infty} (n+t)_ja_{n+t}\alpha^{n+t}x^{n} \\
&= \alpha^t  \sum_{n=-t}^{\infty} \left[ \sum_{j=0}^s c^{(s,t)}_j (n+t)_j \right] a_{n+t}
(\alpha x)^{n}.
\end{align*}

For each pair of $(s,t)$, we solve for constants 
$c^{(s,t)}_j, \;\ j=0,1,2,\dots,s, $ by equating coefficients of $n^j$ in the equation  
$\displaystyle \sum_{j=0}^s c^{(s,t)}_j (n+t)_j = n^s$. Now,  define $\displaystyle A_{s,t,\alpha}(x) = \sum_{n=0}^{\infty} a_{n+t}n^s (\alpha x)^n$ for fixed $s,t \geq 0$. Then,
\begin{equation} \label{rank} 
\sum_{j=0}^s c^{(s,t)}_jx^{j-t}f^{(j)}(\alpha x) 
=  \alpha^t \sum_{n=-t}^{\infty} a_{n+t}n^s (\alpha x)^{n}
= \alpha^t A_{s,t,\alpha}(x) + \alpha^t \sum_{n=-t}^{-1} a_{n+t}n^s (\alpha x)^{n}.
\end{equation}

From the $C^2$-finite relation of $a_n$ in \eqref{c2rela}, 
multiply $x^n$ through, and sum $n$ from $0$ to $\infty$, 
we obtain
\begin{align*} 
0 &= \sum_{n=0}^\infty\sum_{t=0}^r\sum_{\alpha \in S_t}\sum_{s=0}^k b_{s,t,\alpha}n^s\alpha^n a_{n+t}x^n= \sum_{t=0}^r\sum_{s=0}^k\sum_{\alpha \in S_t} 
b_{s,t,\alpha}A_{s,t,\alpha}(x)   \\
&= \sum_{t=0}^r\sum_{s=0}^k\sum_{\alpha \in S_t} b_{s,t,\alpha} 
\left[\alpha^{-t}\sum_{j=0}^s c^{(s,t)}_jx^{j-t}f^{(j)}(\alpha x)
-  \sum_{n=-t}^{-1} a_{n+t}n^s(\alpha x)^{n} \right] , \;\  \text{ from \eqref{rank}.}
\end{align*}
Multiply $x^r$ on both sides and rearrange this equation:
\[    \sum_{t=0}^r\sum_{s=0}^k\sum_{\alpha \in S_t} b_{s,t,\alpha} \alpha^{-t}
\sum_{j=0}^s c^{(s,t)}_jx^{j+r-t}f^{(j)}(\alpha x)
= \sum_{t=0}^r\sum_{s=0}^k\sum_{\alpha \in S_t} b_{s,t,\alpha} 
\sum_{n=0}^{t-1} a_{n}(n-t)^s \alpha^{n-t}  x^{n+r-t} . \]
We see that the left hand side is the differential equation of order at most $k$ and degree at most $r+k$. The right hand side is the polynomial of degree at most $r-1.$
\end{proof}

{\bf Example 1:} Let $a_{n+1} = F_{n+2} \cdot a_{n}.$ Then,  
\begin{align*}
f(x) &= \sum_{n=0}^{\infty} a_nx^n 
=  a_0+\sum_{n=1}^{\infty} F_{n+1} \cdot a_{n-1}x^n
= a_0+x\sum_{n=1}^{\infty} (c_1\alpha_+^{n+1}+c_2\alpha_-^{n+1}) \cdot a_{n-1}x^{n-1}  \\
&= a_0+c_1\alpha_+^2x\sum_{n=0}^{\infty} \alpha_+^n \cdot a_nx^{n}
+c_2\alpha_-^2 x\sum_{n=0}^{\infty}\alpha_-^n \cdot a_nx^{n} , \;\ \mbox{ (shift index $n$ by 1)}\\
&= a_0+  c_1\alpha_+^2xf(\alpha_+x)+ c_2\alpha_-^2xf(\alpha_-x),
\end{align*}
where $\alpha_+$ and $\alpha_-$ are the roots of equation $x^2-x-1=0.$

\begin{code}
> C2ToDiff(N-(c1*a^(n+2)+c2*b^(n+2)),{1,a,b},[a0],
n,N,x,D);
\end{code}



We still consider a first-order relation in the next example, but this time the coefficient $C_n$  has a polynomial factor.

{\bf Example 2:} Let $a_{n+1} = (n+1)2^n \cdot a_{n}.$ Then, 
\begin{align*}
f(x) &= \sum_{n=0}^{\infty} a_nx^n 
= a_0 +\sum_{n=1}^{\infty} n2^{n-1} \cdot a_{n-1}x^n
= a_0 +x\sum_{n=0}^{\infty}  (n+1)2^n \cdot a_nx^{n} \\
&= a_0 + x^2\sum_{n=1}^{\infty} n2^n \cdot a_nx^{n-1}
+x\sum_{n=0}^{\infty} 2^n \cdot a_nx^{n} \\
&=   a_0 + x^2f'(2x)+ xf(2x).
\end{align*}

\begin{code}
> C2ToDiff(N-(n+1)*2^n,{1,2},[a0],n,N,x,D);
\end{code}



Let us now consider a second-order example.

{\bf Example 3:} Let $a_{n+2} = a_{n+1} + 2^n\cdot a_{n}.$ Then 
\begin{align*}
f(x) &= \sum_{n=0}^{\infty} a_nx^n 
= a_0 + a_1x+\sum_{n=2}^{\infty} a_{n-1}x^n
+\sum_{n=2}^{\infty} 2^{n-2} \cdot a_{n-2}x^n \\
&= a_0 + a_1x-a_0x+x\sum_{n=0}^{\infty} a_{n}x^n
+x^2\sum_{n=0}^{\infty}a_{n}(2x)^n \\
&=   a_0 + (a_1-a_0)x + xf(x)+ x^2f(2x).
\end{align*}

\begin{code}
> C2ToDiff(N^2-N-2^n,{1,2},[a0,a1],n,N,x,D);
\end{code}




Similar to holonomic sequences, the differential equation \eqref{diffcom} 
can be made homogeneous by differentiating multiple times until $R(x)$ becomes zero. 

\begin{cor}
Let $\displaystyle f(x) = \sum_{n=0}^{\infty} a_nx^n $
where $a_n$ is a $C^2$-finite sequence of order $r$ and degree $k$.
Then, $f(x)$ satisfies a homogeneous 
linear differential equation with polynomial coefficients
\begin{equation}   
\sum_{\alpha} \left[ q_{\alpha,0}(x)f(\alpha x)+ q_{\alpha,1}(x)f'(\alpha x)
+ \dots + q_{\alpha,r'}(x)f^{(r')}(\alpha x)\right] = 0, 
\end{equation}
where order $r'$ is at most $r+k$, and degree of $q_{\alpha,t}(x)$ for each $\alpha, t$ is at most $r+k$.
\end{cor}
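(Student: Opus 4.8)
The plan is to mimic the proof of Corollary~\ref{cor:homogeneous_holonomic} from the holonomic case, exploiting the fact that the right-hand side $R(x)$ of \eqref{diffcom} is a polynomial of degree at most $r-1$. First I would take the nonhomogeneous equation furnished by Theorem~\ref{GenC2} and differentiate both sides $r$ times with respect to $x$. Since $\deg R \le r-1$, we immediately get $\dfrac{d^r}{dx^r}R(x) = 0$, so the resulting equation is homogeneous; if $R \equiv 0$ already, there is nothing to do and the original bounds suffice.

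Next I would track what happens to the left-hand side under these $r$ differentiations. The key observation is that $\dfrac{d}{dx}\big[q_{\alpha,t}(x)f^{(t)}(\alpha x)\big] = q_{\alpha,t}'(x)f^{(t)}(\alpha x) + \alpha\, q_{\alpha,t}(x)f^{(t+1)}(\alpha x)$, so one differentiation sends a sum of the shape $\sum_{\alpha}\sum_{t} q_{\alpha,t}(x)f^{(t)}(\alpha x)$ to another sum of the same shape: the $\alpha$-families do not mix, the only new numerical factors are powers of the constants $\alpha$, the maximal order increases by exactly $1$, and, crucially, the polynomial degrees of the coefficients do not increase (differentiating $q_{\alpha,t}$ lowers its degree, while the term promoted to the next derivative keeps $q_{\alpha,t}$ itself as coefficient). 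A short induction on the number of differentiations then shows that after $r$ steps the left-hand side has the form $\sum_{\alpha}\big[\tilde q_{\alpha,0}(x)f(\alpha x) + \cdots + \tilde q_{\alpha,r'+r}(x)f^{(r'+r)}(\alpha x)\big]$ with $r' + r \le k + r$ and each $\deg \tilde q_{\alpha,j} \le r+k$, which are precisely the bounds claimed.

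I expect the only mild obstacle to be the bookkeeping for the degree bound: one must argue carefully that no coefficient ever exceeds degree $r+k$, i.e.\ that the chain-rule factors $\alpha$ are harmless constants and that the worst case is simply the untouched coefficient $q_{\alpha,t}$ being carried along unchanged into a higher-order term. Everything else is a direct transcription of the holonomic argument, now carried out with $f^{(j)}(\alpha x)$ in place of $f^{(j)}(x)$.
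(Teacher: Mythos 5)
Your proposal is correct and matches the paper's own (implicitly sketched) argument: the paper likewise obtains the homogeneous equation by differentiating the non-homogeneous relation of Theorem \ref{GenC2} until the right-hand side $R(x)$, of degree at most $r-1$, vanishes, which raises the order by at most $r$ while leaving the coefficient degrees bounded by $r+k$. The only cosmetic remark is that under the paper's convention $f^{(j)}(\alpha x)=\dfrac{d^jf(\alpha x)}{dx^j}$ the chain-rule factor $\alpha$ does not even appear, but as you note such constant factors are harmless in any case.
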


\textbf{Example:} The homogeneous differential equation of $\displaystyle f(x) = \sum_{n=0}^{\infty} a_n x^n$ where
$a_{n+1}=F_{n+2} \cdot a_n$ is
\[f'(x)-c_1\alpha_{+}^2[f(\alpha_+x)+xf'(\alpha_+x)]-c_2\alpha_{-}^2[f(\alpha_-x)+xf'(\alpha_-x)]=0.\]

\begin{code}
> C2ToDiffHom(N-(c1*a^(n+2)+c2*b^(n+2)),{1,a,b},[a0],
n,N,x,D);
\end{code}

The next theorem ensures that we can always find a $C^2$-finite recurrence
relation for the coefficients $a_n$ of $f(x)$ which satisfies a homogeneous linear 
differential equation of composite variables with polynomial coefficients. 

\begin{thm}
\label{thm:conv_c2finite}
Let $\displaystyle f(x) = \sum_{n=0}^{\infty} a_nx^n$.
Assume $f(x)$ satisfies a homogeneous 
linear differential equation with polynomial coefficients
of order $r$ and degree $k$
\begin{equation}    \label{diffc2}
\sum_{\alpha} \left[ q_{\alpha,0}(x)f(\alpha x)+ q_{\alpha,1}(x)f'(\alpha x)
+ \dots + q_{\alpha,r}(x)f^{(r)}(\alpha x)\right] = 0. 
\end{equation}
Then, $a_n$ is a $C^2$-finite sequence of order at most $r+k$ and degree 
at most $r$.
\end{thm}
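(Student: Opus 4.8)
The plan is to adapt, essentially verbatim, the argument used for the holonomic converse (Theorem~\ref{thm:generating_holonomic}), the only new feature being that the composite arguments $f(\alpha x)$ force each coefficient sequence of the resulting recurrence to carry a factor $\alpha^n$, which is precisely what turns a holonomic (polynomial-coefficient) recurrence into a $C^2$-finite ($C$-finite-coefficient) one. First I would write each polynomial coefficient in \eqref{diffc2} as $q_{\alpha,t}(x)=\sum_{s=0}^k b_{s,t,\alpha}x^s$, so that the differential equation becomes a finite linear combination of the elementary blocks $x^s f^{(t)}(\alpha x)$ with constant weights $b_{s,t,\alpha}$.

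Next I would expand these blocks as formal power series in $x$. Differentiating $f(\alpha x)=\sum_{n\ge0}a_n\alpha^n x^n$ term by term $t$ times gives $f^{(t)}(\alpha x)=\sum_{n\ge t}(n)_t a_n\alpha^n x^{n-t}$, hence, after multiplying by $x^s$ and reindexing,
\[
x^s f^{(t)}(\alpha x)=\sum_{n\ge s}(n+t-s)_t\,a_{n+t-s}\,\alpha^{n+t-s}\,x^n .
\]
Substituting into \eqref{diffc2} and reading off the coefficient of $x^n$ gives, for every $n\ge0$,
\[
\sum_\alpha\sum_{t=0}^r\sum_{s=0}^{\min(n,k)} b_{s,t,\alpha}\,(n+t-s)_t\,\alpha^{n+t-s}\,a_{n+t-s}=0 ,
\]
the exact analogue of the identity appearing in the proof of Theorem~\ref{thm:generating_holonomic}, now weighted by $\alpha^{n+t-s}$.

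Then I would repackage this identity as a bona fide $C^2$-finite recurrence. Shifting $n\mapsto n+k$ removes the truncation $\min(n,k)$ (since $\min(n+k,k)=k$ for $n\ge0$), and the surviving terms involve $a_{n+j}$ with $j=k+t-s$ running over $\{0,1,\dots,r+k\}$. Grouping by $j$, the coefficient of $a_{n+j}$ is
\[
C_{j,n}=\sum_\alpha \alpha^n\,\Bigl(\alpha^{j}\sum_{\substack{0\le t\le r,\ 0\le s\le k\\ k+t-s=j}} b_{s,t,\alpha}\,(n+j)_t\Bigr) ,
\]
which is of the form $\sum_\alpha P_{j,\alpha}(n)\alpha^n$ with $\deg P_{j,\alpha}\le r$, since for fixed $j$ each $(n+j)_t$ is a polynomial in $n$ of degree $t\le r$; by Proposition~\ref{prop:converse_closeform_cfinite} each $C_{j,n}$ is therefore a $C$-finite sequence with $\mathrm{Deg}(C_{j,n})\le r$. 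Since the indices occurring range from $a_n$ to $a_{n+r+k}$, the recurrence has order at most $r+k$ and degree at most $r$, as asserted.

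The computations here --- differentiating the series, shifting the summation index inside the falling factorial, and collecting terms according to the value of $k+t-s$ --- are all routine, and the conceptual content lies in the last step, where recognizing the weighted coefficient $C_{j,n}$ as a $C$-finite sequence is exactly an invocation of Proposition~\ref{prop:converse_closeform_cfinite}. The one subtlety I would flag, as in the holonomic case, is the treatment of small $n$: before the shift the relation holds only with the truncated inner sum, and the leading coefficient $C_{r+k,n}=\sum_\alpha b_{0,r,\alpha}\,(n+r+k)_r\,\alpha^{n+r+k}$ could vanish for finitely many $n$, in which case --- just as remarked after Theorem~\ref{hoclose} --- the recurrence is guaranteed only for $n$ beyond the largest such index.
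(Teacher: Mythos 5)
Your proposal is correct and follows essentially the same route as the paper's proof: expand each block $x^s f^{(t)}(\alpha x)$ as a power series, equate coefficients of $x^n$ to obtain the recurrence $\sum_\alpha\sum_{t}\sum_{s} b_{s,t,\alpha}(n+t-s)_t\alpha^{n+t-s}a_{n+t-s}=0$, and recognize each collected coefficient as $C$-finite via Proposition~\ref{prop:converse_closeform_cfinite}. Your shift $n\mapsto n+k$ to remove the $\min(n,k)$ truncation and your remark on the leading coefficient possibly vanishing for small $n$ are minor tidy-ups of details the paper passes over, not a different argument.
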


\begin{proof}
Assume that $\displaystyle q_{\alpha,t}(x) = \sum_{s=0}^kb_{\alpha,s,t}x^s.$
Then, for each $s, t, \alpha,$
\[ x^sf^{(t)}(\alpha x) = \sum_{n=t}^{\infty} (n)_ta_n\alpha^n x^{n-t+s}
= \alpha^{t-s} \sum_{n=s}^{\infty} (n+t-s)_ta_{n+t-s} (\alpha x)^{n} . \]

Now \eqref{diffc2} becomes
\[ \sum_{\alpha} \left[ \sum_{t=0}^r \sum_{s=0}^{k} b_{\alpha,s,t} \alpha^{t-s} 
\sum_{n=s}^{\infty}  (n+t-s)_ta_{n+t-s}(\alpha x)^{n} \right] = 0, \]
and so for each $n \geq 0, \;\ a_n$ satisfies the recurrence
\[  \sum_{t=0}^r \sum_{\alpha} \sum_{s=0}^{\min(n,k)} 
\left[b_{\alpha,s,t}(n+t-s)_t\alpha^{n+t-s}\right] a_{n+t-s} = 0. \]
Proposition \ref{prop:converse_closeform_cfinite} implies that the coefficient 
 $\displaystyle C_{i,n} = \sum_{\alpha} \sum_{s=0}^{\min(n,k)} b_{\alpha,s,s+i}(n+i)_{s+i}\alpha^{n+i}$, 
for each $i$, is a $C$-finite sequence. It follows that $a_n$ satisfying 
\[ C_{r,n}a_{n+r} + C_{r-1,n}a_{n+r-1}+ \dots +C_{-k,n}a_{n-k} = 0, \]
is a $C^2$-finite sequence.
The claim of the order and degree is now immediate.
\end{proof}

\item  {\bf   Closure properties:} 

\begin{thm}  \label{c2close}
Assume $a_n$ and $b_n$ are $C^2$-finite sequences.
The following are also $C^2$-finite sequences.
\begin{enumerate}[label=(\roman*)]
\item addition: $(a_n+b_n)_{n=0}^{\infty},$ 
\item term-wise multiplication: $(a_n \cdot b_n)_{n=0}^{\infty},$ 
\item Cauchy product: $(\sum_{i=0}^n a_i \cdot b_{n-i})_{n=0}^{\infty},$ 
\item partial sum: $(\sum_{i=0}^n a_i)_{n=0}^{\infty}$,  
\item linear subsequence: $(a_{mn})_{n=0}^{\infty}$, where $m$ is a positive integer.
\end{enumerate}
\end{thm}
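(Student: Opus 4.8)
The overall plan is to adapt the solution-space (matrix) approach already used for the $C$-finite and holonomic classes, taking care of the one feature that genuinely changes here: the coefficients now live in the ring $\mathcal{C}$ of $C$-finite sequences, which is \emph{not} a field — it has zero divisors. Throughout, let $a_n$ be annihilated by $L_a=\sum_{i=0}^r C_{i,n}N^i$ with $C_{r,n}\neq 0$ for all $n$, and $b_n$ by $L_b=\sum_{j=0}^s D_{j,n}N^j$ with $D_{s,n}\neq 0$ for all $n$. The first observation is that a nowhere-vanishing $C$-finite sequence is a non-zero-divisor in $\mathcal{C}$ (if its termwise product with a $C$-finite $d_n$ is the zero sequence, then $d_n=0$ for every $n$), and the same holds for every shift and every finite product of such sequences. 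Hence we may localize $\mathcal{C}$ at the multiplicative set generated by $\{C_{r,n+j},D_{s,n+j}:j\ge 0\}$, obtaining a commutative ring $\widehat{\mathcal{Q}}\supseteq\mathcal{C}$ in which the leading coefficients and their shifts are invertible. Inside $\widehat{\mathcal{Q}}$, solving the recurrence for its leading term and iterating shows, exactly as in the holonomic case, that every shift $N^k a$ is a $\widehat{\mathcal{Q}}$-linear combination of $a,Na,\dots,N^{r-1}a$, and likewise for $b$.

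For (i) addition and (ii) term-wise multiplication I would copy the corresponding $C$-finite arguments, but over $\widehat{\mathcal{Q}}$: writing $c_n=a_n+b_n$ (resp. $a_nb_n$), each of the $r+s+1$ sequences $c_n,\dots,c_{n+r+s}$ (resp. the $rs+1$ sequences $c_n,\dots,c_{n+rs}$) lies in the $\widehat{\mathcal{Q}}$-span of the $r+s$ sequences $\{N^i a\}_{i<r}\cup\{N^j b\}_{j<s}$ (resp. of the $rs$ products $\{(N^i a)(N^j b)\}_{i<r,\,j<s}$). The associated coefficient matrix has one more row than column, so it has a nontrivial left null vector — and this is the point that needs the ring version of the familiar fact: over any commutative ring a homogeneous linear system with more unknowns than equations has a nonzero solution, one such being built from the maximal minors of the matrix by a Cramer/Laplace expansion. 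Clearing denominators (multiplying through by a common element of the localizing set, again a non-zero-divisor) turns this into a genuine $C^2$-finite recurrence for $c_n$, of order at most $r+s$ (resp. $rs$). For (v) linear subsequence the same template applies with $c_n=a_{mn}$: each $c_{n+t}=a_{mn+mt}$ is a $\widehat{\mathcal{Q}}$-combination of $a_{mn},\dots,a_{mn+r-1}$ whose coefficients are ratios of $m$-fold subsections of $C$-finite sequences, hence still $C$-finite by Theorem \ref{Cclose} after clearing denominators, and $c_n,\dots,c_{n+r}$ give a relation of order at most $r$.

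Claim (iv), partial sum, admits a cleaner route that avoids the matrix machinery. If $c_n=\sum_{i=0}^n a_i$ then $(N-1)c=Na$, and $Na$ is itself $C^2$-finite: substituting $n\mapsto n+1$ in $L_a a=0$ shows that $\widetilde{L_a}:=\sum_{i=0}^r C_{i,n+1}N^i$, whose leading coefficient $C_{r,n+1}$ is $C$-finite and nowhere zero, annihilates $Na$. Therefore the operator $\widetilde{L_a}(N-1)$ annihilates $c_n$; its coefficients are $C$-finite and its leading coefficient is $C_{r,n+1}\neq 0$, so $c_n$ is $C^2$-finite of order at most $r+1$.

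The remaining case, (iii) the Cauchy product, is where I expect the real difficulty to lie, and I would attack it through generating functions: by Theorem \ref{GenC2} the series $A(x)=\sum a_nx^n$ and $B(x)=\sum b_nx^n$ are $DC$-finite, the Cauchy product corresponds to $C(x)=A(x)B(x)$, and by Theorem \ref{thm:conv_c2finite} it suffices to show that the product of two $DC$-finite functions is again $DC$-finite. One would like to run the same solution-space argument on the $\mathbb{C}(x)$-span of the composite derivatives $A^{(i)}(\alpha x)B^{(j)}(\beta x)$; the obstacle is that, unlike in the holonomic case, this span need not be finite-dimensional, since the defining equation of a $DC$-finite function relates $f^{(t)}(x)$ to $f^{(t)}(\alpha x)$ at a \emph{shifted} argument rather than reducing high-order derivatives at a single point, so differentiating keeps introducing new composite functions. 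Making the argument terminate requires closing the finite set of base points $\alpha$ under the relevant multiplications so that only finitely many composite functions occur — this bookkeeping is the crux of the whole theorem. (A subsidiary point worth flagging in all five cases: as with holonomic sequences, the leading coefficient produced by these constructions may vanish at some $n$, and — since a nonzero $C$-finite sequence can vanish infinitely often — the resulting recurrence should be read as valid outside that zero set, possibly together with extra initial values.)
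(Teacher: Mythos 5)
For parts (i), (ii), (iv), (v) you are following essentially the route the paper intends: its proof of Theorem \ref{c2close} is simply a deferral to the solution-space argument of Theorem \ref{hoclose}, illustrated afterwards by exactly the kind of matrices with $C$-finite entries and $C$-finite null vectors that you describe. Your additional care is a genuine strengthening of that sketch: localizing at the nowhere-vanishing leading coefficients (which are non-zero-divisors), and invoking the fact that over a commutative ring a homogeneous system with more unknowns than equations has a nontrivial solution, addresses the point the paper glosses over, namely that the coefficient ring has zero divisors. (Small caveat: the vector of signed maximal minors can be identically zero, so the Cramer construction alone is not enough; you need the McCoy-type statement itself.) Your operator factorization $\widetilde{L_a}(N-1)$ for the partial sum is cleaner than the paper's matrix set-up and, unlike it, automatically yields a leading coefficient $C_{r,n+1}$ that never vanishes.

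Two genuine gaps remain, measured against the statement as the paper defines $C^2$-finite. First, for (i), (ii), (v) the null-vector construction only guarantees that the top coefficient is a nonzero $C$-finite sequence, whereas the definition requires it to be nonzero for every $n$; since a nonzero $C$-finite sequence can vanish on whole arithmetic progressions, your parenthetical "valid outside the zero set" does not deliver a $C^2$-finite recurrence. The paper's own Remark and its Example 1 (two order-$1$ sequences whose sum needs an order-$3$ relation because the order-$2$ one has leading coefficient $1-(-1)^n$) show that repairing this costs extra order and a further argument about zero sets of $C$-finite sequences, as in \cite{JNP}; your proposal leaves this step out. Second, (iii) is not actually proved. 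You correctly diagnose why the holonomic generating-function argument does not transfer: reducing derivatives of a $DC$-finite function via its defining equation introduces new dilation arguments. But the fix you gesture at, closing the set of base points under multiplication, does not terminate in general, because the multiplicative monoid generated by the characteristic roots is infinite unless all roots are roots of unity; nor can one stay at the single argument $x$, since the generating function of a $C^2$-finite sequence need not be $D$-finite. So the Cauchy-product case remains unestablished in your write-up — as, in fairness, it does in the paper, which offers nothing beyond "along the same line as Theorem \ref{hoclose}" for this case.
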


The proof is along the same line as that of Theorem \ref{hoclose} for holonomic sequences, and we shall not repeat it here. This section will be devoted to a detailed discussion and examples instead.

\begin{rem}
The reader may have noticed that  this time we have not specified the upper bound of the order in the theorem. While the same bounds as those used for holonomic sequences could be imposed, it is worth repeating here that for a $C^2$-finite sequence of order $r$, the leading coefficient $C_{r,n}$ must not be zero for any $n \geq 0$.
This condition makes it not straightforward to determine a general bound for the order of the sequence.
The first example below (from \cite{JNP}) illustrates this issue.
\end{rem}

{\bf Example 1:} Let $a_n$ and $b_n$ be a $C^2$-finite sequence of order 1 defined by
\begin{align*}
a_{n+1}+(-1)^na_n &= 0, \\
b_{n+1}+b_n  &=0 .
\end{align*}
Let $c_n = a_n+b_n$ for $n \geq 0.$
A recurrence of order $2$ for $c_n$ is in the form
\[   [1-(-1)^n]c_{n+2} +2c_{n+1}+[1+(-1)^n]c_n=0. \]
This recurrence does not satisfy the definition of $C^2$-finite as
the leading term, $C_{2,n} = 1-(-1)^n$, contains infinitely many zeros.

On the other hand, a recurrence of order 3 makes $c_n$ a $C^2$-finite sequence:

\[ 
c_{n+3} + \frac{1}{2} \left[1+(-1)^n\right]c_{n+2}
+\frac{1}{2} \left[1-(-1)^n\right]c_n=0. 
\]

The following example illustrates the idea behind the derived recurrence relations under the addition and term-wise multiplication operations.

{\bf Example 2:} Let $a_n$ be a sequence that satisfies the relation 
\[ a_{n+1} = F_{n+2} \cdot a_{n}, \qquad  a_0=1,\]
where $F_n$ is the Fibonacci sequence.
Let $b_n$ be a sequence that satisfies the relation 
\[ b_{n+2} = b_{n+1}+2^nb_{n}, \;\ b_0=1, b_1=1. \] 

\begin{itemize}[leftmargin=0.2in]
    \item addition: $c_n = a_n+b_n.$ 

To solve for the recurrence relation
of $c_n$, we write $c_n, c_{n+1}, c_{n+2}$ and $c_{n+3}$ 
as a linear combination of $a_n, b_n$ and $b_{n+1}.$ That is,

\[  \begin{bmatrix}
c_n\\
c_{n+1}\\
c_{n+2}\\
c_{n+3}
\end{bmatrix} =   
 \begin{bmatrix}
1 & 1 & 0  \\
F_{n+2} & 0 & 1  \\
F_{n+3}F_{n+2} & 2^n & 1  \\
F_{n+4}F_{n+3}F_{n+2} & 2^n
& 2^{n+1}+1  
\end{bmatrix}  \cdot
 \begin{bmatrix}
a_n\\
b_{n}\\
b_{n+1}
\end{bmatrix}.\]

The $C$-finite solution $P$ (in the null space) of the $C$-finite sequence matrix $M$ is
\[ P = [ 2^nF_{n+2}(F_{n+4}F_{n+3}-F_{n+3}-2^{n+1}), \dots ],   \]
which gives rise to a $C^2$-finite relation of $c_n, \, n \geq 1$, of order 3
\begin{align*}
& 2^nF_{n+2}(F_{n+4}F_{n+3}-F_{n+3}-2^{n+1})c_n\\
&+\left[ F_{n+4}F_{n+3}F_{n+2}+2^{2n+1}
-2^{n+1}F_{n+3}F_{n+2}-F_{n+3}F_{n+2} \right]c_{n+1} \\
& + \left[ 2^{n+1}F_{n+2}+F_{n+2}
-F_{n+4}F_{n+3}F_{n+2}+2^n \right] c_{n+2}\\
&+\left[ F_{n+3}F_{n+2}-F_{n+2}-2^n \right] c_{n+3} =0. 
\end{align*}

\item term-wise multiplication: $d_n = a_n \cdot b_n.$ 

To solve for the recurrence relation
of $d_n$, we write $d_n, d_{n+1}$ and $d_{n+2}$ 
as a linear relation of $a_nb_n$ and $a_nb_{n+1}.$ That is,

\[  \begin{bmatrix}
c_n\\
c_{n+1}\\
c_{n+2}
\end{bmatrix} =   
 \begin{bmatrix}
1 &  0  \\
0 & F_{n+2}  \\
2^nF_{n+3}F_{n+2}
& F_{n+3}F_{n+2}
\end{bmatrix}  \cdot
 \begin{bmatrix}
a_nb_n\\
a_nb_{n+1}
\end{bmatrix}.\]

The $C$-finite solution $P$ (in the null space) of the $C$-finite sequence matrix $M$ is
\[ P = [ -2^nF_{n+2}F_{n+3}, -F_{n+3},1 ],   \]
yielding a $C^2$-finite relation of $c_n, \, n \geq 0$, of order 2:
\begin{align*}
-2^nF_{n+2}F_{n+3}c_n -F_{n+3}c_{n+1}+c_{n+2} = 0.
\end{align*}

\end{itemize}

As we mentioned earlier, the proof of closure properties for $C^2$-finite is similar to the holonomic one. It turns out that we can directly use the same Maple code to get recurrence relations for $C^2$-finite:

\begin{code}
> HoAdd(N-F(n+2),N^2-N-2^n,n,N,c);
> HoTermWise(N-F(n+2),N^2-N-2^n,n,N,c);
\end{code}

\item  {\bf   Asymptotic approximation solutions:}

We have already seen that the rate of growth of $C$-finite 
is $\mathcal{O}(\alpha^n)$ for some constant $\alpha$. 
Here, the rate of growth of $C^2$-finite is 
$\mathcal{O}(\alpha^{n^2})$ for some constant $\alpha$.  
Also, we have presented a procedure to obtain an asymptotic approximation solution for the holonomic recurrence in the previous section.
It appears, however, to be more difficult to derive asymptotic approximation solutions for the $C^2$-finite recurrences, and merits further investigation. 
We leave this as an open problem.

\textbf{Open problem 3:}
Find asymptotic approximations of solutions to the $C^2$-finite sequences. 

\end{enumerate}





\end{document}